\DeclareRobustCommand*\cal{\@fontswitch\relax\mathcal}
\newcommand{\bR}{\mathbb{R}} 
\newtheorem{theorem}{Theorem}[section]
\newtheorem{Lemma}[theorem]{Lemma}
\newtheorem{Assumption}{Assumption}[section]
\newtheorem{Remark}{Remark}
\begin{document}
\title{\bf An MM algorithm for estimation of a two component semiparametric density mixture
with a known component}
\author{Zhou Shen \\ {\small Department of Statistics, Purdue University}\\
  and 
    Michael Levine\thanks{
    Michael Levine gratefully acknowledges \textit{partial support from the NSF-DMS grant 1208994}} \\  {\small Department of Statistics, Purdue University}\\
    {\small 250 N. University St., West Lafayette, IN 47907}\\
  {\small  mlevins@purdue.edu} \hspace{.2cm}\\
      and Zuofeng Shang \\ {\small Department of Mathematical Sciences, SUNY Binghamton}
      }
    \maketitle


\begin{abstract}
We consider a semiparametric mixture of two univariate density functions where one of them is known while the weight and the other function are unknown. We do not assume any additional structure on the unknown density function. For this mixture model, we derive a new sufficient identifiability condition and pinpoint a specific class of distributions describing the unknown component for which this condition is mostly satisfied. We also suggest a novel approach to estimation of this model that is based on an idea of applying a maximum smoothed likelihood to what would otherwise have been an ill-posed problem. We introduce an iterative MM (Majorization-Minimization) algorithm that estimates all of the model parameters. We establish that the algorithm possesses a descent property with respect to a log-likelihood objective functional and prove that the algorithm, indeed, converges. Finally, we also illustrate the performance of our algorithm in a simulation study and using a real dataset. 
\end{abstract}

\noindent
{\bf Keywords}: penalized smoothed likelihood, MM algorithm, regularization.
\vfill


\section{Introduction}\label{sec:intro}

We consider a general case of a two-component univariate mixture model where one component distribution is known while the mixing proportion and the other component distribution are unknown. Such a model can be defined at its most general as 
\begin{equation}\label{model2}
g(x)=(1-p)f_{0}(x)+pf(x)
\end{equation}
where $f_{0}$ is a known density component, while $p\in (0,1)$ and $f(x)$ are the unknown weight and the unknown density component, respectively. The semiparametric mixtures of density functions have been considered by now in a number of publications. The earliest seminal publications in this area are \cite{Hall_Zhou} and \cite{Hall_Pakyari_Elmore}.
From the practical viewpoint, the model \eqref{model2} is related to the multiple testing problem where $p$-values are uniformly distributed on $[0,1]$ under the null hypothesis but their distribution under the alternative is unknown. In the setting of model \eqref{model2}, this means that the known distribution is uniform while the goal is to estimate the proportion of the false null hypothesis $p$ and the distribution of the $p-$ values under the alternative. More detailed descriptions in statistical literature can be found in e.g. \cite{efron2012large} and \cite{robin2007semi}. 
Historically, whenever a two-component mixture model with a known component was considered, some assumptions were imposed on the unknown density function $f(x)$. Most commonly, it was assumed that an unknown distribution belongs to a particular parametric family. In such a case, \cite{cohen1967estimation} and \cite{lindsay1983geometry} used the maximum likelihood-based method  to fit it; \cite{day1969estimating} used the minimum $\chi^{2}$ method, while \cite{lindsay1993multivariate} used the method of moments. \cite{jin2008proportion} and \cite{cai2010optimal} used empirical characteristic functions to estimate the unknown cumulative density function under a semiparametric normal mixture model. A somewhat different direction was taken by \cite{bordes_delmas} who considered a special case of the model \eqref{model2} where the unknown component belonged to a location family. In other words, their model is defined as 
\begin{equation}\label{model1}
g(x)=(1-p)f_{0}(x)+pf(x-\mu)
\end{equation} where $f_{0}$ is known while $p\in (0,1)$, the non-null location parameter $\mu$ and the pdf $f$ that is symmetric around $\mu$ are the unknown parameters. The model of \cite{bordes_delmas} was motivated by the problem of detection of differentially expressed genes under two or more conditions in microarray data. Typically, a test statistic is built for each gene. Under the null hypothesis (which corresponds to a lack of difference in expression), such a test statistic has a {\it known} distribution (commonly, Student's or Fisher). Then, the response of thousands of genes is observed; such a response can be thought of as coming from a mixture of two distributions: the known distribution $f_0$ (under the null hypothesis) and the unknown distribution $f$ under the alternative hypothesis. Once the parameters $p$, $\mu$, and $f$ are estimated, we can estimate the probability that a gene belongs to a null component distribution conditionally on observations. 

\cite{bordes_delmas} establishes some sufficient identifiability conditions for their proposed model; they also suggest two estimation methods for it, both of which rely heavily on the fact that the density function of the unknown component is symmetric. A sequel paper, \cite{bordes2010semiparametric}, also establishes a joint central limit theorem for estimators that are based on one of these methods.  There is no particular practical reason to make the unknown component symmetric and \cite{bordes_delmas} themselves note that ``In our opinion, a challenging problem would be to consider model \eqref{model1} without the symmetry assumption on the unknown component". This is the goal we set for ourselves in this manuscript. Our approach is  based on, first, defining the (joint) estimator of $f$ and $p$ as a minimizer of the log-likelihood type objective functional of $p$ and $f$. Such a definition is implicit in nature; however, we construct an MM (Majorization-Minimization) iterative algorithm that possesses  a descent property with respect to that objective functional. Moreover, we also show that the resulting algorithm actually converges. Our simulation studies also show that the algorithm is rather well-behaved in practice.  

Just as we were finishing our work, a related publication \citet{patra2015estimation} came to our attention. \citet{patra2015estimation} also consider a two-component mixture model with one unknown component. Their identifiability approach is somewhat more general as our discussion mostly concerns sufficient conditions for specific functional classes containing the function $f$. They propose some general identifiability criteria for this model and obtain a separate estimator of the weight $p$; moreover, they also construct a distribution free finite sample lower confidence bound for the weight $p$. \citet{patra2015estimation} start with estimating parameter $p$ first; then, a completely automated and tuning parameter free estimate of $f$ can be constructed when $f$ is decreasing. When $f$ is not decreasing, one can start with e.g. estimating $\hat g$ based on observations $X_{1},\ldots,X_{n}$; then, one can construct e.g. a kernel estimate of unknown $f$ that is proportional to $\max(\hat g-(1-\hat p)f_{0},0)$.  In contrast to their approach, our approach estimates both $p$ and $f$ jointly and the algorithm works the same way regardless of the shape of $f$.  

The rest of our manuscript is structured as follows. Section \eqref{ident} discusses identifiability of the model \eqref{model2}. 
Section \eqref{est} introduces our approach to estimation of the model \eqref{model2}. Section \eqref{emp_sec} suggests an empirical version of the algorithm first introduced in Section \eqref{est} that can be implemented in practice. Section \eqref{sim} provides simulated examples of the performance of our algorithm. Section \eqref{realdata} gives a real data example. Finally, Section \eqref{Discussion} rounds out the manuscript with a discussion of our result and delineation of some related future research.

\section{Identifiability}\label{ident}

In general, the model \eqref{model2} is not identifiable. In what follows, we investigate some special cases. For an unknown density function $f$, let us denote  its mean $\mu_{f}$ and its variance $\sigma^{2}_{f}$. To state a sufficient identifiability result, we consider a general equation 
\begin{equation}\label{unid}
(1-p)f_{0}(x)+pf(x)=(1-p_{1})f_{0}(x)+p_{1}f_{1}(x).
\end{equation}
We also denote variance of the distribution $f(x)$ as a function of its mean $\mu_{f}$ as $V(\mu_{f})$. 
\begin{Lemma}\label{identifiability}
Consider the model \eqref{model2} with the unknown density function $f$. Without loss of generality, assume that the first moment of $f_{0}$ is zero while its second  moment is finite. We assume that the function $f$ belongs to a set of density functions whose first two moments are finite, whose means are not equal to zero and that are all of the same sign; that is, $f\in {\cal F}=\{f:\int x^{2}f(x)\,dx <+\infty; \mu_{f}>0 \mbox{ or } \mu_{f}<0\}$. Moreover, we assume that for any $f\in {\cal F}$ the function $G(\mu_f)=\frac{V(\mu_f)}{\mu_f}$ is strictly increasing. Then, the equation \eqref{unid} has the unique solution $p_{1}=p$ and $f_{1}=f$. 
\end{Lemma}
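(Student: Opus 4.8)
The plan is to reduce the functional equation \eqref{unid} to a pair of moment identities and then play the strict monotonicity of $G$ against the positivity of a variance-type quantity. First I would rearrange \eqref{unid} into the pointwise identity
\[
p\,f(x)-p_{1}\,f_{1}(x)=(p-p_{1})\,f_{0}(x),
\]
valid for (Lebesgue-almost) every $x$. Integrating it against $1$ is vacuous; integrating against $x$ and using $\int x f_{0}(x)\,dx=0$ gives $p\mu_{f}=p_{1}\mu_{f_{1}}$. Denote this common value by $m$; since every density in ${\cal F}$ has nonzero mean, $m\neq 0$, and $\mu_{f}=m/p$, $\mu_{f_{1}}=m/p_{1}$ then automatically have the same sign. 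Integrating the identity against $x^{2}$, writing $\int x^{2}f(x)\,dx=V(\mu_{f})+\mu_{f}^{2}$ and $\int x^{2}f_{1}(x)\,dx=V(\mu_{f_{1}})+\mu_{f_{1}}^{2}$, and noting $\int x^{2}f_{0}(x)\,dx=\sigma_{0}^{2}$ is the (finite) variance of $f_{0}$, a short computation that substitutes $\mu_{f}=m/p$, $\mu_{f_{1}}=m/p_{1}$ yields
\[
p\,V(\mu_{f})-p_{1}\,V(\mu_{f_{1}})=(p-p_{1})\Bigl(\sigma_{0}^{2}+\tfrac{m^{2}}{p\,p_{1}}\Bigr).
\]

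Next I would recast the left-hand side through $G$. Because $G(\mu)=V(\mu)/\mu$, we have $p\,V(\mu_{f})=p\,\mu_{f}\,G(\mu_{f})=m\,G(m/p)$ and likewise $p_{1}\,V(\mu_{f_{1}})=m\,G(m/p_{1})$, so the display above reads
\[
m\bigl(G(m/p)-G(m/p_{1})\bigr)=(p-p_{1})\Bigl(\sigma_{0}^{2}+\tfrac{m^{2}}{p\,p_{1}}\Bigr).
\]
Introduce $\psi(t)=m\,G(m/t)$ for $t\in(0,1)$. Regardless of the sign of $m$, $\psi$ is strictly decreasing: if $m>0$ then $t\mapsto m/t$ is decreasing and $G$ is increasing, so $t\mapsto m\,G(m/t)$ is decreasing; if $m<0$ then $t\mapsto m/t$ is increasing, hence $t\mapsto G(m/t)$ is increasing, and multiplication by the negative constant $m$ again makes $\psi$ decreasing. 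Thus the left-hand side equals $\psi(p)-\psi(p_{1})$, whose sign is opposite to that of $p-p_{1}$ whenever $p\neq p_{1}$; while, since $m\neq 0$, the factor $\sigma_{0}^{2}+m^{2}/(p\,p_{1})$ is strictly positive, so the right-hand side carries the same sign as $p-p_{1}$. These are incompatible unless $p=p_{1}$.

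Finally, with $p=p_{1}$ the relation $p\mu_{f}=p_{1}\mu_{f_{1}}$ forces $\mu_{f}=\mu_{f_{1}}$, and the pointwise identity collapses to $p\,f(x)=p\,f_{1}(x)$, i.e. $f=f_{1}$, giving uniqueness. I expect the only genuinely delicate point to be the sign bookkeeping in the monotonicity step — checking that $\psi$ is strictly decreasing in both sign regimes of $m$, and recognizing that the ``nonzero mean of the same sign'' hypothesis on ${\cal F}$ is exactly what prevents $m=0$ (which would leave $G$, and the whole argument, undefined) and keeps $f_{1}$ inside the postulated class. Everything else is routine integration, legitimate because of the finiteness of the second moments of $f_0$ and of the members of ${\cal F}$.
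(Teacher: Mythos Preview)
Your proof is correct and follows essentially the same route as the paper: derive the first- and second-moment identities from \eqref{unid}, express the second-moment equation through $G(\mu)=V(\mu)/\mu$, and extract a sign contradiction from the strict monotonicity of $G$. The only cosmetic difference is that the paper works with the increasing function $\mu\mapsto \mu+G(\mu)$ and argues the cases $p_{1}>p$ and $\mu_f\gtrless 0$ separately, whereas your substitution $\psi(t)=m\,G(m/t)$ packages the sign bookkeeping more compactly; the underlying argument is the same.
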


\begin{proof}
First, let us assume that the mean $\mu_{f}> 0$. 
Then, the assumption of our lemma implies that the function $V: (0,\infty)\mapsto (0,\infty)$ is strictly increasing. Let us use the notation $\theta_{0}$ for the second moment of $f_{0}$. If we assume that there are distinct $p_{1}\ne p$ and $f_{1}\ne f$ such that $(1-p)f_{0}(x)+pf(x)=(1-p_{1})f_{0}(x)+p_{1}f_{1}(x)$, the following two moment equations are easily obtained  
\begin{equation}\label{1st}
\zeta=p_1\mu_{f_1}=p\mu_f
\end{equation}
and 
\begin{equation}\label{2nd}
(p_1-p)\theta_0=\zeta(\mu_{f_1}-\mu_f)+p_1V(\mu_{f_1})-pV(\mu_f),
\end{equation} 
where $\zeta>0$. Our task is now to show that if \eqref{1st} and \eqref{2nd} are true, then $p=p_1$ and $f=f_1$.
To see this, let us assume $p_1>p$ (the case $p_1<p$ can be treated in exactly the same way). Then from the first equation we have immediately that $\mu_{f_1}<\mu_f$; moreover, since the function $G(\mu_f)$ is a strictly increasing one, then so is the function $\mu_f+G(\mu_f)$. With this in mind, we have   
\[
\mu_{f_1}+\frac{V(\mu_{f_1})}{\mu_{f_1}}<\mu_{f}+\frac{V(\mu_{f})}{\mu_{f}}.
\]
On the other hand, $(p_1-p)\theta_0\ge0$
which implies 
\[
0\le\zeta(\mu_{f_1}-\mu_f)+p_1V(\mu_{f_1})-pV(\mu_f)=\zeta(\mu_{f_1}-\mu_f)+\zeta
\left(\frac{V(\mu_{f_1})}{\mu_{f_1}}-\frac{V(\mu_{f})}{\mu_{f}}\right).
\]
 Therefore, this implies that  
\[
\mu_{f_1}+\frac{V(\mu_{f_1})}{\mu_{f_1}}\ge\mu_{f}+\frac{V(\mu_{f})}{\mu_{f}}.
\]
 and we end up with a contradiction. Therefore, we must have $p=p_1$. This, in turn, implies immediately that 
$f=f_1$.

The case where $\mu_{f}<0$ proceeds similarly. Let us now consider the case where the variance function $V:(-\infty,0)\rightarrow (0,\infty)$ and is strictly monotonically increasing. As a first step, again take $p_{1}>p$. Clearly, the first moment equation is yet again
\eqref{1st} where now $\zeta<0$. If $p_{1}>p$, we now have $\mu_{f_{1}}>\mu_{f}$ and, due to the strict monotonicity of $G(\mu)$, we have  $\mu_{f_{1}}+\frac{V(\mu_{f_{1}})}{\mu_{f_{1}}}>\mu_{f}+\frac{V(\mu_{f})}{\mu_{f}}$. On the other hand, since $(p_{1}-p)\theta_{0}\ge 0$, we have 
\begin{align*}
&0\le \zeta(\mu_{f_{1}}-\mu_{f})+p_{1}V(\mu_{f_{1}})-pV(\mu_{f})\\
&=\zeta\left(\left\{\mu_{f_{1}}+\frac{V(\mu_{f_{1}})}{\mu_{f_{1}}}\right\}-\left\{\mu_{f}+\frac{V(\mu_{f})}{\mu_{f}}\right\}\right).
\end{align*}
Because $\zeta<0$, the above implies that $ \left\{\mu_{f_{1}}+\frac{V(\mu_{f_{1}})}{\mu_{f_{1}}}\right\}-\left\{\mu_{f}+\frac{V(\mu_{f})}{\mu_{f}}\right\}<0$ which contradicts the assumption that the function $G(\mu)$ is strictly increasing. 
\end{proof}

\begin{Remark}
To understand better what is going on here,  it is helpful if we can suggest a more specific density class which satisfies the sufficient condition in Lemma \eqref{identifiability}. The form of Lemma \eqref{identifiability} suggests one such possiibility - a family of natural exponential families with power variance functions (NEF-PVF).  For convenience, we give the definition due to \cite{Bar-Lev_Stramer1987}: ``A natural exponential family (NEF for short) is said to have a power variance function if its variance function is of the form $V(\mu)=\alpha\mu^{\gamma}$, $\mu\in \Omega$, for some constants $\alpha\ne0$ and $\gamma$, called the scale and power parameters, respectively". This family of distributions is discussed in detail in \cite{bar1986reproducibility} and \cite{Bar-Lev_Stramer1987}. In particular, they establish that the parameter space $\Omega$ can only be ${\bR}$, $\bR^{+}$ and $\bR^{-}$; moreover, we can only have $\gamma=0$ iff $\Omega=\bR$. The most interesting for us property is that (see Theorem 2.1 from \cite{Bar-Lev_Stramer1987} for details) is that for any NEF-PVF, it is necessary that $\gamma\notin (-\infty,0)\cup(0,1)$; in other words, possible values of $\gamma$ are $0$, corresponding to the normal distribution, $1$, corresponding to Poisson, and any positive real numbers that are greater than $1$. In particular, the case $\gamma=2$ corresponds to gamma distribution. Out of these choices, the only one that does not result in a monotonically increasing function $G(\mu)$ is $\gamma=0$ that corresponds to the normal distribution; thus, we have to exclude it from consideration. With this exception gone, the NEF-PVF framework includes only density families with either strictly positive or strictly negative means; due to this, it seems a rather good fit for the description of the family of density functions $f$ in the Lemma \eqref{identifiability}. 

Note that the exclusion of the normal distribution is also rather sensible from the practical viewpoint because it belongs to a location family; therefore, it can be treated in the framework of \cite{bordes_delmas}. More specifically, Proposition $1$ of \cite{bordes_delmas} suggests that, when $f(x)$ is normal,  the equation \eqref{unid} has at most two solutions if $f_{0}$ is an even pdf and at most three solutions if $f_{0}$ is not an even pdf. 
\end{Remark}

\begin{Remark}
It is also of interest to compare our Lemma \eqref{identifiability} with the Lemma 4 of \citet{patra2015estimation} that also establishes an identifiability result for the model \eqref{model2}. The notions of identifiability that are considered in the two results differ: whereas we discuss the identifiability based on the first two moments, Lemma 4 of \citet{patra2015estimation} looks at a somewhat different definition of identifiability. At the same time, the interpretation given in the previous Remark, suggests an interesting connection. For example, the case where the unknown density function $f$ is gamma corresponds to the power parameter of the NEF-PVF family being equal to $2$. According to our identifiability result Lemma \eqref{identifiability}, the mixture model \eqref{model2} is, then, identifiable with respect to the first two moments. On the other hand, let us assume that the known density function $f_{0}$ is the standard normal. Since its support fully contains the support of any density from the gamma family, identifiability in the sense of \citet{patra2015estimation} now follows from their Lemma 4.  
\end{Remark}

\begin{Remark}
We only assumed that the first moment of $f_{0}$ is equal to zero for simplicity. It is not hard to reformulate the  Lemma \eqref{identifiability} if this is not the case. The proof is analogous. 
\begin{Lemma}\label{identifiability_1}
Consider the model \eqref{model2} with the unknown density function $f$. We assume that the known density $f_{0}$ has finite first two moments and denote its first moment $\mu_{f_{0}}$. We also assume that the function $f$ belongs to a set of density functions whose first two moments are finite, and whose means are all either greater than $\mu_{f_{0}}$ or less than $\mu_{f_{0}}$: 
\[
f\in {\cal F}=\{f:\int x^{2}f(x)\,dx<+\infty; \mu_{f}>\mu_{f_{0}} \mbox{ or } \mu_{f}<\mu_{f_{0}}\}.
\]
Let us assume that $G(\mu_{f})=\frac{V(\mu_{f})}{\mu_{f}-\mu_{f_{0}}}$ is a strictly increasing function in $\mu_{f}$ for a fixed, known $f_{0}$.  Then, the equation \eqref{unid} has the unique solution $p_{1}=p$ and $f_{1}=f$.  
\end{Lemma}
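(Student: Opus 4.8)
The plan is to deduce Lemma~\ref{identifiability_1} from Lemma~\ref{identifiability} by a translation of the line. Given $X$ with density $g$ as in \eqref{model2}, put $Y=X-\mu_{f_{0}}$; then $Y$ has density $\tilde g(y)=(1-p)\tilde f_{0}(y)+p\tilde f(y)$, where $\tilde f_{0}(y)=f_{0}(y+\mu_{f_{0}})$ has zero first moment and finite second moment, while $\tilde f(y)=f(y+\mu_{f_{0}})$ has mean $m:=\mu_{f}-\mu_{f_{0}}$ and the same variance as $f$. Because every admissible $\mu_{f}$ lies strictly on one side of $\mu_{f_{0}}$, the corresponding means $m$ are nonzero and share a common sign, and the variance of $\tilde f$ expressed through $m$ is $\tilde V(m):=V(m+\mu_{f_{0}})$, so $\tilde V(m)/m=V(\mu_{f})/(\mu_{f}-\mu_{f_{0}})=G(\mu_{f})$ is strictly increasing in $m$ precisely because $G$ is strictly increasing in $\mu_{f}$. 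Hence the translated model, together with the translated density class, satisfies the hypotheses of Lemma~\ref{identifiability} verbatim; and since equation \eqref{unid} for $(g,f_{0})$ becomes \eqref{unid} for $(\tilde g,\tilde f_{0})$ under the substitution $x=y+\mu_{f_{0}}$, Lemma~\ref{identifiability} yields $p_{1}=p$ and $\tilde f_{1}=\tilde f$, hence $f_{1}=f$.

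If instead one prefers a self-contained argument, I would simply rerun the proof of Lemma~\ref{identifiability} with $\mu_{f}-\mu_{f_{0}}$ in place of $\mu_{f}$. Matching first moments and second moments about $\mu_{f_{0}}$ on the two sides of \eqref{unid} gives $\zeta:=p(\mu_{f}-\mu_{f_{0}})=p_{1}(\mu_{f_{1}}-\mu_{f_{0}})$ and
\[
(p_{1}-p)\,\sigma_{f_{0}}^{2}=\zeta(\mu_{f_{1}}-\mu_{f})+p_{1}V(\mu_{f_{1}})-pV(\mu_{f})=\zeta\Big[\big(\mu_{f_{1}}+G(\mu_{f_{1}})\big)-\big(\mu_{f}+G(\mu_{f})\big)\Big],
\]
where $\sigma_{f_{0}}^{2}>0$ is the variance of $f_{0}$ and $G(\mu)=V(\mu)/(\mu-\mu_{f_{0}})$. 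Assume $p_{1}>p$ (the case $p_{1}<p$ is symmetric). If the means exceed $\mu_{f_{0}}$ then $\zeta>0$ and the first-moment identity forces $\mu_{f_{1}}<\mu_{f}$; if the means are below $\mu_{f_{0}}$ then $\zeta<0$ and it forces $\mu_{f_{1}}>\mu_{f}$. Either way, strict monotonicity of $G$---hence of $\mu\mapsto\mu+G(\mu)$---makes the bracket on the right have the sign opposite to $\zeta$, so the right-hand side is strictly negative, contradicting $(p_{1}-p)\sigma_{f_{0}}^{2}>0$. Hence $p=p_{1}$, and then $f=f_{1}$.

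I do not expect a genuine obstacle: structurally this is Lemma~\ref{identifiability}. The two points requiring care are (i) recentering the second-moment equation so that the cross-terms in $\mu_{f_{0}}$ cancel and the factor multiplying $(p_{1}-p)$ is the \emph{variance} of $f_{0}$ (which is strictly positive) rather than its raw second moment; and (ii) keeping track of the sign of $\zeta$, which flips between the two one-sided cases and is precisely what lets the monotonicity of $G$ produce the contradiction in both.
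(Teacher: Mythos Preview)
Your proposal is correct and matches the paper's own treatment, which simply asserts that the proof is analogous to that of Lemma~\ref{identifiability}; your second paragraph carries this out explicitly with $\mu_f-\mu_{f_0}$ playing the role of $\mu_f$ and $\sigma_{f_0}^{2}$ in place of $\theta_0$, exactly as the paper intends. Your first paragraph's translation reduction is an equivalent and slightly cleaner way to make ``analogous'' precise.
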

\end{Remark}



\section{Estimation}\label{est}

\subsection{Possible interpretations of our approach} Let $h$ be a positive bandwidth and $K$ a symmetric positive-valued kernel function that is also a true density; as a technical assumption, we will also assume that $K$ is continuously differentiable. The rescaled version of this kernel function is denoted $K_h(x)=K(x/h)/h$ for any $x\in\bR$. We will also need a linear smoothing operator $\mathcal{S}f(x)=\int K_h(x-u)f(u)du$ and a nonlinear smoothing operator $\mathcal{N}f(x)=\exp(\mathcal{S}\log{f(x)})$
for any generic density function $f$. For simplicity, let us assume that our densities are defined on a closed interval, e.g. $[0,1]$. This assumption is here for technical convenience only when proving algorithmic convergence related results. Simulations in the Section \eqref{sim} show that the algorithm works well also when the support of the density $f$ is e.g. half the real line. In the future, we will omit these integration limits whenever doing so doesn't cause confusion. A simple application of Jensen's inequality and Fubini's theorem suggests that $\int{\cal N}f(x)\,dx\le \int Sf(x)\,dx=\int f(x)\,dx=1$. 

Our estimation approach is based on selecting $p$ and $f$ that minimize the following log-likelihood type objective functional:
\begin{equation}\label{obj.fc1}
\ell(p,f)=\int g(x)\log\frac{g(x)}{(1-p)f_0(x)+p\mathcal{N}f(x)}dx.
\end{equation}
The reason the functional \eqref{obj.fc1} is of interest as an objective functional is as follows. First, recall that $KL(a(x),b(x))=\int \left[a(x)\log \frac{a(x)}{b(x)}+b(x)-a(x)\right]\,dx$ is a Kullback-Leibler distance between the two arbitrary positive integrable functions (not necessarily densities) $a(x)$ and $b(x)$; as usual, $KL(a,b)\ge 0$. Note that 
the functional \eqref{obj.fc1} can be represented as a penalized Kullback-Leibler distance between the target density $g(x)$ and the smoothed version of the mixture $(1-p)f_{0}(x)+p{\cal N}f(x)$; indeed, we can represent $\ell(p,f)$ as 
 \begin{equation}\label{main.problem}
 \ell (p,f)=KL(g(x),(1-p)f_{0}(x)+p{\cal N}f(x))+p\left\{1-\int {\cal N}f(x)\,dx\right\}.
 \end{equation}
The quantity $1-\int {\cal N}f(x)\,dx=\int [f(x)-{\cal N}f(x)]\,dx$ is effectively the penalty on the smoothness of the unknown density. Thus, the functional \eqref{obj.fc1} can be interpreted as a penalized smoothed likelihood functional. 

Of course, it is a matter of serious interest to find out if the optimization problem \eqref{main.problem} has a solution at all.  This problem can be thought of as a kind of generalized Tikhonov regularization problem; these problems have recently become an object of substantial interest in the area of ill-posed inverse problems. A very nice framework for these problems is described in the monograph \cite{flemming2011generalized} and we will follow it here. First of all, we define the domain of the operator ${\cal N}$ to be a set of square integrable densities, t.i. all densities on $[0,1]$ that belong in $L_{2}[0,1]$. We also define $L_{2}^{+}([0,1])$ to be the set of all non-negative functions that belong to $L_{2}([0,1])$. Define a nonlinear smoothing operator $A:{\cal D}(A)\subseteq L_{2}(D)\rightarrow L_{2}(D)$ as
\begin{align*}
Af(x):=(1-p)f_{0}(x)+p{\cal N}f(x)
\end{align*}
where ${\cal D}(A)=\{f(x):f \in L_{2}^{+}([0,1]),\ f(x)\geq \eta >0,\ \int_{0}^{1} f(x)\,dx=1,\ \exists F\in \mathbb{R}^+\ s.t.\ ||f||_2\leq F\}$. In optimization theory, $A$ is commonly called a {\it forward operator}. Note that, as long as $||K||_2^{2}:=\int K^{2}(u)\,du<\infty$, it is easy to show that ${\cal N}f(x)\in L_{2}([0,1])$ if $f(x)\in L_{2}([0,1])$ and, therefore, this framework is justified. 

Next, for any two functions $a(x)$ and $b(x)$, we define a {\it fitting functional}  $Q:L_2([0,1])\times L_2([0,1])\rightarrow [0,\infty)$ as $Q(a(x),b(x));= KL(a(x),b(x))$. Finally, we also define a non-negative {\it stabilizing functional} $\Omega :{\cal D}(\Omega)\subseteq L_2([0,1])\rightarrow [0,1]$ as $\Omega(f) := \left\{1-\int_{0}^{1} {\cal N}f(x)\,dx\right\}$ where ${\cal D}(\Omega)={\cal D}(A)$. Now, we are ready to define the minimization problem
\begin{equation}\label{equ:min_problem}
T_{p,g}(f)=Q_p(g,Af)+p\Omega(f)\rightarrow \min
\end{equation}
where $p$ plays the role of {\it regularization parameter}. We use the subscript $p$ for $Q$ to stress the fact that the fitting functional is dependent on the regularization parameter; this doesn't seem to be common in optimization literature but we can still obtain the existence result that we need. The following set of assumptions is needed to establish existence of the solution of this problem; although a version of these assumptions is given in \cite{flemming2011generalized} , we give them here in full for ease of reading. 
\begin{Assumption}\label{assumption}
Assumptions on $A: {\cal D}(A)\subset L_{2}([0,1])\rightarrow L_{2}([0,1])$
\begin{enumerate}
\item\label{A1} $A$ is sequentially continuous with respect to the weak topology of the space $L_{2}([0,1])$, i.e. if $f_{k}\rightharpoonup f$ for $f,f_{k}\in {\cal D}(A)$, then we have $A(f_{k})\rightharpoonup A(f)$
\item\label{A2} ${\cal D}(A)$ is sequentially closed with respect to the weak topology on $L_{2}([0,1])$. This means that $f_{k}\rightharpoonup f$ for $\{f_{k}\}\in {\cal D}(A)$  implies that $f\in {\cal D}(A)$. 
\end{enumerate}
Assumptions on $Q:L_{2}([0,1])\times L_{2}([0,1])\rightarrow [0,\infty)$:
\begin{enumerate}\setcounter{enumi}{2}
\item \label{Q1}$Q_p(g,v)$ is sequentially lower semi-continuous with respect to the weak topology on $L_{2}([0,1])\times L_{2}([0,1])$, that is if $p_{k}\rightarrow p$, $g_{k}\rightharpoonup g$ and $v_{k}\rightharpoonup v$, then $Q_p(g,v)\le \liminf_{k\rightarrow \infty} Q_{p_k}(g_k,v_k)$.
\item\label{Q2} If $Q_p(g,v_{k})\rightarrow 0$ then there exists some $v\in L_{2}([0,1])$ such that $v_{k}\rightharpoonup v$. 
\item\label{Q3} If $v_{k}\rightharpoonup v$ and $Q_p(g,v)<\infty$, then $Q_p(g,v_{K})\rightarrow Q_p(g,v)$.
\end{enumerate}
Assumptions on  $\Omega:{\cal D}(A)\rightarrow [0,1]$:
\begin{enumerate}\setcounter{enumi}{5}
\item\label{O1} $\Omega$ is sequentially lower semicontinuous with respect to the weak topology in $L_{2}([0,1])$, that is, if $f_{k}\rightharpoonup f$ for $f,f_{k}\in L_{2}([0,1])$, we have $\Omega(f)\le \lim\inf_{k\rightarrow \infty}\Omega(f_{k})$
\item\label{O2} The sets $M_{\Omega}(c):=\{f\in {\cal D}(A):\Omega(f)\le c\}$ are sequentially pre-compact with respect to the weak topology on $L_{2}([0,1])$ for all $c\in \bR$, that is each sequence in $M_{\Omega}(c)$ has a subsequence that is convergent in the weak topology on $L_{2}([0,1])$. 
\end{enumerate}
\end{Assumption}
\begin{Lemma}
Assume that the kernel function $K$ is bounded from above: $K(x)\le {\cal K}$. Then, the optimization problem \eqref{equ:min_problem} satisfies all of the assumptions listed in \eqref{assumption}. 
\end{Lemma}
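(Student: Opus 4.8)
The plan is to verify, one at a time, the seven hypotheses of Assumption~\ref{assumption}, organised according to the object they constrain: the forward operator $A$ (items~\ref{A1}--\ref{A2}), the fitting functional $Q$ (items~\ref{Q1}--\ref{Q3}), and the stabilizing functional $\Omega$ (items~\ref{O1}--\ref{O2}). Before touching any of them I would record two elementary consequences of the definition of $\mathcal{D}(A)$ together with the new hypothesis $K\le\mathcal{K}$. First, since $K$ is a density and $K\le\mathcal{K}$, one has $\int K^2\le\mathcal{K}$, so $K\in L_2(\bR)$; hence $K_h(x-\cdot)\in L_2([0,1])$ with $\|K_h(x-\cdot)\|_2\le\|K\|_2/\sqrt h$ and $\|K_h\|_\infty\le\mathcal{K}/h$, so $\mathcal{S}$ is a Hilbert--Schmidt (in particular compact) operator on $L_2([0,1])$ and $\mathcal{N}f\in L_2([0,1])$ for every $f\in\mathcal{D}(A)$. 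Second, for $f\in\mathcal{D}(A)$ the bound $\eta\le f$ gives $\log f\ge\log\eta$, while $\log t\le t-1$ gives $(\log f)^+\le(f-1)^+$; together with $\|f\|_2\le F$ this shows that $\{\log f:f\in\mathcal{D}(A)\}$ is bounded in $L_2([0,1])$ and, after smoothing, that $\log\eta\le\mathcal{S}\log f(x)\le\|K\|_2F/\sqrt h$ for all $x$, so that $\mathcal{N}f$ takes values in a fixed compact subinterval of $(0,\infty)$. Consequently $Af=(1-p)f_0+p\mathcal{N}f$ is, uniformly over $\mathcal{D}(A)$, bounded in $L_\infty$ (hence in $L_2$) and bounded below by $p\eta>0$.

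With these preliminaries the ``easy'' conditions fall out quickly. For~\ref{A2}: $\mathcal{D}(A)$ is the intersection of the norm-closed convex sets $\{f\ge\eta\}$, $\{\int_0^1 f=1\}$ and $\{\|f\|_2\le F\}$, hence is convex and norm-closed, hence weakly sequentially closed by Mazur's lemma. For~\ref{Q1} and~\ref{O1}: $Q_p(g,v)=KL(g,v)$ is jointly convex in $(g,v)$ and strongly lower semicontinuous (Fatou applied to its nonnegative integrand), hence weakly sequentially lower semicontinuous; and once~\ref{A1} is available, $f_k\rightharpoonup f$ yields $\mathcal{N}f_k\to\mathcal{N}f$ in $L_2$ and therefore $\int\mathcal{N}f_k\to\int\mathcal{N}f$, so $\Omega(f)=1-\int_0^1\mathcal{N}f$ is in fact weakly continuous on $\mathcal{D}(A)$, a fortiori weakly lsc. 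For~\ref{O2}: $M_\Omega(c)\subseteq\mathcal{D}(A)$ is bounded in $L_2$ by $F$, and bounded sets in a Hilbert space are weakly sequentially pre-compact. For~\ref{Q2} and~\ref{Q3}: any sequence $v_k=Af_k$ lies, by the preliminaries, in a fixed order interval $[p\eta,C]$ and is bounded in $L_2$, so $\{v_k\}$ is weakly pre-compact (this gives~\ref{Q2}); and for~\ref{Q3}, using $v_k\rightharpoonup v$ one writes $Q_p(g,v_k)=\int g\log g-\int g\log v_k+\int v_k-1$, notes $\int v_k=\langle v_k,1\rangle\to\int v$, and passes to the limit in $\int g\log v_k$ by dominated convergence (the integrand is bounded by $g\max(|\log(p\eta)|,|\log C|)\in L_1$) once one knows $v_k\to v$ pointwise --- which again reduces to~\ref{A1}.

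The real work, and what I expect to be the main obstacle, is condition~\ref{A1}: sequential continuity of $A$ (equivalently of $\mathcal{N}$) from the weak topology of $L_2([0,1])$ into itself. I would attack it through the factorization $\mathcal{N}=\exp\circ\,\mathcal{S}\circ\log$: from $f_k\rightharpoonup f$ deduce that $\log f_k$ is bounded in $L_2$; push this through the compact smoother $\mathcal{S}$ to obtain pointwise (indeed uniform) convergence $\mathcal{S}\log f_k(x)\to\mathcal{S}\log f(x)$; apply continuity of $\exp$ to get $\mathcal{N}f_k\to\mathcal{N}f$ pointwise; and finally upgrade this, via the uniform bound $\mathcal{N}f_k\in[\eta,C]$ and dominated convergence, to $\mathcal{N}f_k\to\mathcal{N}f$ in $L_2$, so $Af_k\to Af$ strongly and hence weakly. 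The delicate link is the passage $\mathcal{S}\log f_k\to\mathcal{S}\log f$: the logarithm is \emph{not} weakly sequentially continuous on $L_2$, so $\log f_k$ need not converge weakly to $\log f$ even when $f_k\rightharpoonup f$ (high-frequency oscillations of $f_k$ around a constant density are the prototype), and the Hilbert--Schmidt argument only transports weak convergence that one already possesses. This is exactly the place where the fine structure of $\mathcal{D}(A)$ must be used --- whether some convergence stronger than weak $L_2$ is in fact available on $\mathcal{D}(A)$ (so that $\log f_k$ converges in $L_1$, or a.e.\ along subsequences), or whether the topology used in~\ref{A1} should be strengthened so as to remain compatible with~\ref{O2}. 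Once~\ref{A1} is secured, the remaining verifications are the routine ones above, and the existence of a minimizer of~\eqref{equ:min_problem} follows from the generalized Tikhonov existence theorem of~\cite{flemming2011generalized}.
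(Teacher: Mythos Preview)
Your overall architecture---verify \ref{A1}--\ref{O2} one by one, with essentially all the work concentrated in \ref{A1}---is exactly the paper's. Your treatments of \ref{A2} (convex, norm-closed, hence weakly closed via Mazur), of \ref{Q1}--\ref{Q3} (defer to the literature on $KL$), of \ref{O1} (once \ref{A1} is in hand, $f\mapsto\int\mathcal{N}f$ is weakly continuous; the paper phrases the same thing via weak Fatou), and of \ref{O2} (norm-boundedness of $\mathcal{D}(A)$ plus sequential Banach--Alaoglu) all match or slightly streamline what the paper does.

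Where you and the paper part ways is precisely at the step you flag as the obstacle. The paper does \emph{not} share your hesitation: it asserts that $f_k\rightharpoonup f$ in $L_2$ forces $\log f_k\rightharpoonup\log f$, via the Lipschitz estimate
\[
\int_0^1|\log f_k-\log f|\,q\;\le\;\frac{1}{\eta}\int_0^1|f_k-f|\,q\;\longrightarrow\;0,
\]
and then repeats the device to pass from $\mathcal{S}\log f_k\rightharpoonup\mathcal{S}\log f$ to $\mathcal{N}f_k\rightharpoonup\mathcal{N}f$. Your suspicion is well founded: weak $L_2$ convergence does \emph{not} give $\int_0^1|f_k-f|\,q\to0$. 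The oscillatory example you allude to works on the nose: $f_k(x)=1+\tfrac12\sin(2\pi kx)$ lies in $\mathcal{D}(A)$ for $\eta\le\tfrac12$, $F^2\ge\tfrac98$, and $f_k\rightharpoonup 1$, yet $\int_0^1|f_k-1|\,dx=1/\pi\not\to0$; moreover $\log f_k$ converges weakly to the constant $\int_0^1\log\bigl(1+\tfrac12\sin 2\pi t\bigr)\,dt<0$, not to $\log 1=0$. So the paper's argument for \ref{A1} is exactly the leap you decline to make, and it does not supply the extra ingredient (a topology on $\mathcal{D}(A)$ stronger than weak $L_2$ but still compact, or an additional structural restriction) that your analysis correctly identifies as missing.
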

\begin{proof}
We start with the Assumption \ref{assumption}({\romannumeral 0\ref{A1}}). Note that the space dual to $L_{2}([0,1])$ is again $L_{2}([0,1])$; therefore, the weak convergence $f_{k}\rightharpoonup f$ in $L_2([0,1])$ means that, for any $q\in L_{2}([0,1])$, we have $\int_{0}^{1}f_{k}(x)q(x)\,dx\rightarrow \int_{0}^{1}f(x)q(x)\,dx$ as $k\rightarrow \infty$. To show that the Assumption \ref{assumption}({\romannumeral 0\ref{A1}}) is, indeed, true, we first note that $\{f_k\}$ and $f$ are bounded away from 0 which tells $\int_{0}^{1} |\log f_k(x) -\log f(x)|q(x)dx\leq \int_{0}^{1} |f_k(x)-f(x)|\frac{1}{\eta}q(x)dx\rightarrow 0$ as $k\rightarrow \infty$ for some positive $\eta$ that does not depend on $k$. Therefore, $f_k\rightharpoonup f$ implies $\log f_k\rightharpoonup \log f$. Second, 
\begin{align*}
\int S\log f_k(x)q(x)dx &= \int q(x)\int_{0}^{1} K_h(x-u)\log f_k(u)du\,dx \\
&= \int_{0}^{1} \log f_k(u)\int K_h(x-u)q(x)dx\,du
\end{align*}
Note that, since $\log f_k\rightharpoonup \log f$, and the function $\tilde q(u)=\int K_h(x-u)q(x)dx$ belongs to $L_{2}([0,1])$, we can claim that 
$\int S\log f_k(x)q(x)dx \longrightarrow \int_{0}^{1} \log f(u)\int K_h(x-u)q(x)dx\,du=\int S\log f(x)q(x)dx$ as $k\rightarrow \infty$. In other words, we just established that $S\log f_{k}\rightharpoonup S\log f$ as $k\rightarrow \infty$. Moving ahead, we find out, using the Cauchy - Schwarz inequality that $\int_{0}^{1} K_h(x-u)\log f_k(u)du < \int_{0}^{1} K_h(x-u) f_k(u)du \leq {\cal K}\int_{0}^{1} f_k(u)du = {\cal K}$. The same is true for $f(x)$ and so  $\int |exp\{ S\log f_k(x)\} - exp\{ S\log f(x)\}|g(x)dx \leq \int |S\log f_k(x)-S\log f(x)|\le E\cdot g(x)dx \rightarrow 0$ where $E$ is a positive constant that does not depend on $k$. Therefore, $f_k\rightharpoonup f$ finally implies ${\cal N} f_k \rightharpoonup {\cal N} f$ and thus $Af_k \rightharpoonup Af$.

Next, we need to prove that the assumption \ref{assumption}({\romannumeral 0\ref{A2}}) is also valid. To show that ${\cal D}(A)$ is sequentially closed, we select a particular function $q\equiv 1$ on $[0,1]$. Such a function clearly belongs to $L_{2}([0,1])$  and so we have  $\int_{0}^{1} f_{k}(x)q(x)\,dx\equiv \int_{0}^{1} f_{k}(x)\,dx \rightarrow \int_{0}^{1}f(x)\,dx$ as $k\rightarrow \infty$. Since we know that, for any $k$, we have $\int f_{k}(x)\,dx=1$, it follows that $\int_{0}^{1} f(x)dx=1$ as well. It is not hard to check that other characteristics of $D(A)$ are preserved under weak convergence as well. 

The fitting functional $Q$ is a Kullback-Leibler functional; the fact that it satisfies assumptions \ref{assumption}({\romannumeral 0\ref{Q1}})({\romannumeral 0\ref{Q2}})({\romannumeral 0\ref{Q3}}) has been demonstrated several times in optimization literature concerned with variational regularization with non-metric fitting functionals. The details can be found in e.g. \cite{flemming2010theory}. 

The sequential lower semi-continuity of the stabilizing functional $\Omega$ in \ref{assumption}({\romannumeral 0\ref{O1}}) is guaranteed by the weak version of Fatou's Lemma. Indeed, let us define $\phi_k(x)=Sf_k(x)-{\cal N}f_k(x)$. Then, due to Jensen's inequality, $\{\phi_k\}$ is a sequence of non-negative measurable functions. We already know that $f_{k}\rightharpoonup f$ guarantees ${\cal N}f_{k}\rightharpoonup {\cal N}f$; therefore, we have $\phi_{k}\rightharpoonup \phi$ where $\phi (x)=Sf(x)-{\cal N}f(x)$. By the weak version of Fatou's lemma, we then have $\int \phi (x)\,dx\le \liminf_{k\rightarrow \infty}\int \phi_k(x)\,dx$, or equivalently, $\Omega (f)\le \liminf_{k\rightarrow \infty}\Omega (f_k)$. Therefore, $\Omega :{\cal D}(A)\rightarrow [0,1]$ is lower semi-continuous with respect to the weak topology on $L_{2}([0,1])$.
Finally, the assumption \ref{assumption}({\romannumeral 0\ref{O2}}) is always true simply because $D(A)$ is a closed subset of a closed ball in $L_{2}([0,1])$; sequential Banach-Alaoglu theorem lets us conclude then that $M_{\Omega}(c)$ is sequentially compact with respect to the weak topology on $L_{2}([0,1])$.
\end{proof}

Finally, we can state the existence result. Note that in optimization literature sometimes one can see results of this nature under the heading of {\it well-posedness}, not existence; see, e.g. \cite{hofmann2007convergence}. 
\begin{theorem}\label{thm:Existence}
{\bf (Existence)}
For any mixture density $g(x)\in L_2([0,1])$ and any $0<p<1$, the minimization problem \eqref{equ:min_problem} has a solution. The minimizer $f^*\in {\cal D}(A)$ satisfies $T_{p,g}(f^*)< \infty$ if and only if there exists a density function $\bar{f}\in {\cal D}(A)$ with $Q_p(g,A\bar{f})< \infty$.
\end{theorem}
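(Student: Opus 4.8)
The plan is to apply the \emph{direct method of the calculus of variations}, in exactly the form developed for generalized Tikhonov regularization in \cite{flemming2011generalized}: exhibit a minimizing sequence for \eqref{equ:min_problem}, extract a weakly convergent subsequence using the coercivity encoded in the stabilizing functional $\Omega$, and pass to the limit using weak sequential closedness of ${\cal D}(A)$, weak sequential continuity of $A$, and weak sequential lower semicontinuity of $Q$ and $\Omega$. The key point is that all of the analytic work has already been done: the preceding lemma establishes that the present $A$, $Q$ and $\Omega$ verify Assumption \ref{assumption}, so here one only has to string those properties together.

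First I would set $m:=\inf_{f\in{\cal D}(A)}T_{p,g}(f)$, noting that ${\cal D}(A)\neq\emptyset$ since it contains, e.g., the uniform density on $[0,1]$. If $m=+\infty$ then every element of ${\cal D}(A)$ is trivially a minimizer, so I may assume $m<\infty$ and choose a minimizing sequence $\{f_k\}\subset{\cal D}(A)$ with $T_{p,g}(f_k)\to m$. Since $Q_p\ge0$, for all sufficiently large $k$ we have $p\,\Omega(f_k)\le T_{p,g}(f_k)\le m+1$, so $\{f_k\}$ eventually lies in the sublevel set $M_\Omega\bigl((m+1)/p\bigr)$. By Assumption \ref{assumption}({\romannumeral 0\ref{O2}}) this set is weakly sequentially pre-compact in $L_2([0,1])$, so along a subsequence $f_{k_j}\rightharpoonup f^*$; Assumption \ref{assumption}({\romannumeral 0\ref{A2}}) then gives $f^*\in{\cal D}(A)$, and Assumption \ref{assumption}({\romannumeral 0\ref{A1}}) gives $A f_{k_j}\rightharpoonup A f^*$.

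Next I would pass to the limit. Applying Assumption \ref{assumption}({\romannumeral 0\ref{Q1}}) with the constant sequences $p_k\equiv p$, $g_k\equiv g$ and with $v_k=Af_{k_j}\rightharpoonup Af^*$ gives $Q_p(g,Af^*)\le\liminf_{j\to\infty}Q_p(g,Af_{k_j})$, and Assumption \ref{assumption}({\romannumeral 0\ref{O1}}) gives $\Omega(f^*)\le\liminf_{j\to\infty}\Omega(f_{k_j})$. Using $\liminf a_j+\liminf b_j\le\liminf(a_j+b_j)$,
\[
T_{p,g}(f^*)=Q_p(g,Af^*)+p\,\Omega(f^*)\le\liminf_{j\to\infty}\bigl(Q_p(g,Af_{k_j})+p\,\Omega(f_{k_j})\bigr)=\liminf_{j\to\infty}T_{p,g}(f_{k_j})=m,
\]
and since $m$ is the infimum it follows that $T_{p,g}(f^*)=m$, so $f^*$ is a minimizer.

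Finally I would dispose of the ``if and only if''. The minimizer $f^*$ just produced attains $T_{p,g}(f^*)=m$, so $T_{p,g}(f^*)<\infty$ exactly when $m<\infty$, i.e.\ exactly when some $\bar f\in{\cal D}(A)$ has $T_{p,g}(\bar f)<\infty$; and since $0\le p\,\Omega(\bar f)\le p$, that condition is equivalent to $Q_p(g,A\bar f)<\infty$. I do not expect this proof to present any real difficulty: the substantive obstacles — the weak sequential continuity of the nonlinear smoothing operator ${\cal N}$ and the weak sequential closedness of ${\cal D}(A)$ — were already resolved in the proof of the preceding lemma, and the only points here that call for a little care are the correct orientation of the $\liminf$ inequalities and the (immediate) observation that $Q_p\ge0$ confines the minimizing sequence to a single sublevel set of $\Omega$.
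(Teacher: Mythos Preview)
Your proposal is correct and follows essentially the same approach as the paper: the direct method of the calculus of variations, using the sublevel-set compactness of $\Omega$ to extract a weakly convergent minimizing subsequence and then invoking the lower semicontinuity of $Q_p$ and $\Omega$ and the weak continuity of $A$ (all verified in the preceding lemma) to pass to the limit. Your write-up is in fact slightly more complete than the paper's, since you explicitly handle the trivial case $m=+\infty$ and spell out the ``if and only if'' using the boundedness of $\Omega$.
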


\begin{proof}
Set $c:=\inf_{f\in {\cal D}(A)}T_{p,g}(f)<\infty$. Note that $c<\infty$ due to existence of $\bar{f}$ and thus the trivial case of $c=\infty$ is excluded. Next, take a sequence $(f_k)_{k\in \mathbb{N}}\in {\cal D}(A)$ with $T_{p,g}(f_k) \rightarrow c$. Then
\begin{equation}
\Omega(f_k) \le \frac{1}{p}T_{p,g}(f_k)\le \frac{1}{p}(c+1)
\end{equation}
for sufficiently large $k$ and by the compactness of the sublevel sets of $\Omega$ there is a subsequence $(f_{k_l})_{l\in \mathbb{N}}$ converging to some $\tilde{f}\in {\cal D}(A)$. The continuity of $A$ implies $Af_{k_l}\rightharpoonup A\tilde{f}$ and the lower semi-continuity of $Q_p$ and $\Omega$ gives
\begin{equation}
T_{p,g}(\tilde{f}) \le \liminf_{l \rightarrow \infty} T_{p,g}(f_{k_l})=c
\end{equation}
that is, $\tilde{f}$ is a minimizer of $T_{p,g}$.
\end{proof}

\subsection{Algorithm}
Now we go back to introducing the algorithm that would search for unknown $p$ and $f(x)$. The first result that we need is the following technical Lemma.  
\begin{Lemma}\label{lemma:divergence}
For any pdf $\widetilde{f}$ and any real number $\widetilde{p}\in(0,1)$,
\begin{align}\label{eqn:divergence}
&\ell(\widetilde{p},\widetilde{f})-\ell(p,f)\\
&\le-\int g(x)\left[(1-w(x))\log\left(\frac{1-\widetilde{p}}{1-p}\right)
+w(x)\log\left(\frac{\widetilde{p}\mathcal{N}\widetilde{f}(x)}{p\mathcal{N}f(x)}\right)\right]dx\nonumber
\end{align}
where $w(x)=\frac{p\mathcal{N}f(x)}{(1-p)f_0(x)+p\mathcal{N}f(x)}$.
\end{Lemma}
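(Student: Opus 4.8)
The plan is to recognize \eqref{eqn:divergence} as the minorization step of an EM/MM scheme: the right-hand side is obtained from the left-hand side by rewriting a ratio of smoothed mixtures as a convex combination and then applying Jensen's inequality to the logarithm a single time.

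First I would expand the left-hand side directly from \eqref{obj.fc1}. Writing $m(x)=(1-p)f_0(x)+p\mathcal N f(x)$ and $\widetilde m(x)=(1-\widetilde p)f_0(x)+\widetilde p\mathcal N\widetilde f(x)$, the $g(x)\log g(x)$ terms cancel and
\[
\ell(\widetilde p,\widetilde f)-\ell(p,f)=\int g(x)\log\frac{m(x)}{\widetilde m(x)}\,dx=-\int g(x)\log\frac{\widetilde m(x)}{m(x)}\,dx.
\]
Since $f_0$ is a density, $p\in(0,1)$ and $\mathcal N f\ge 0$, we have $m(x)>0$, and with $w(x)=p\mathcal N f(x)/m(x)$ one gets $1-w(x)=(1-p)f_0(x)/m(x)$, so for every $x$ the numbers $w(x),1-w(x)$ are nonnegative and sum to one.

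Next I would use the elementary identity
\[
\frac{\widetilde m(x)}{m(x)}=(1-w(x))\,\frac{1-\widetilde p}{1-p}+w(x)\,\frac{\widetilde p\,\mathcal N\widetilde f(x)}{p\,\mathcal N f(x)},
\]
obtained by multiplying and dividing the $f_0$-term by $(1-p)f_0(x)$ and the $\mathcal N\widetilde f$-term by $p\,\mathcal N f(x)$ (the latter being legitimate because $\mathcal N f(x)=\exp(\mathcal S\log f(x))>0$ whenever $f$ is bounded away from zero). Concavity of $\log$, applied pointwise with weights $w(x),1-w(x)$, gives
\[
\log\frac{\widetilde m(x)}{m(x)}\ge(1-w(x))\log\frac{1-\widetilde p}{1-p}+w(x)\log\frac{\widetilde p\,\mathcal N\widetilde f(x)}{p\,\mathcal N f(x)}.
\]
Multiplying this inequality by $-g(x)\le0$, integrating, and substituting into the displayed expression for $\ell(\widetilde p,\widetilde f)-\ell(p,f)$ produces exactly \eqref{eqn:divergence}.

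The argument is essentially routine; the only points that deserve a word of care are the positivity of $m(x)$ and $\mathcal N f(x)$ (so that $w(x)\in[0,1]$ and the ratios above are well defined, with the usual convention that the $w(x)$-term is $0$ on the null set where $w(x)=0$), and the fact that the post-integration inequality remains valid in $[-\infty,+\infty]$ without any a priori finiteness of $\ell$. I do not anticipate a genuine obstacle: the whole content of the lemma is the identification of $w(x)$ as the \emph{responsibility} weight that converts the log-ratio into a Jensen bound, after which the inequality is immediate.
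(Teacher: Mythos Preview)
Your proof is correct and follows essentially the same route as the paper: expand the difference $\ell(\widetilde p,\widetilde f)-\ell(p,f)$ as $-\int g\log(\widetilde m/m)$, rewrite $\widetilde m/m$ as the convex combination $(1-w)\frac{1-\widetilde p}{1-p}+w\frac{\widetilde p\,\mathcal N\widetilde f}{p\,\mathcal N f}$, and apply the concavity of the logarithm. The paper's proof is terser and does not spell out the positivity caveats you mention, but the argument is identical.
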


\begin{proof}[Proof of Lemma \ref{lemma:divergence}]
The result follows by the following straightforward calculations:
\begin{eqnarray*}
&\ell(\widetilde{p},\widetilde{f})-\ell(p,f)=
-\int g(x)\log\left(\frac{(1-\widetilde{p})f_0(x)+\widetilde{p}\mathcal{N}\widetilde{f}(x)}
{(1-p)f_0(x)+p\mathcal{N}f(x)}\right)dx\\
&=-\int g(x)\log\left((1-w(x))\frac{1-\widetilde{p}}{1-p}+
w(x)\frac{\widetilde{p}\mathcal{N}\widetilde{f}(x)}{p\mathcal{N}f(x)}\right)dx\\
&\le -\int g(x)\left[(1-w(x))\log\left(\frac{1-\widetilde{p}}{1-p}\right)+w(x)
\log\left(\frac{\widetilde{p}\mathcal{N}\widetilde{f}(x)}{p\mathcal{N}f(x)}\right)\right],
\end{eqnarray*}
where the last inequality follows by convexity of the negative logarithm function.
\end{proof}

Suppose at iteration $t$, we get the updated pdf $f^t$ and the updated mixing proportion $p^t$.
Let $w^{t}(x)=\frac{p^{t}\mathcal{N}f^{t}(x)}{(1-p^t)f_0(x)+p^t\mathcal{N}f^t(x)}$, and define 
\[
p^{t+1}=\int g(x)w^t(x)dx,
\] 
\[
f^{t+1}(x)=\alpha^{t+1}\int K_h(x-u)g(u)w^t(u)du,
\]
where $\alpha^{t+1}$ is a normalizing constant needed to ensure that $f^{t+1}$ integrates to one.
Then the following result holds.

\begin{theorem}\label{descent:property}
For any $t\ge0$, $\ell(p^{t+1},f^{t+1})\le\ell(p^t,f^t)$.
\end{theorem}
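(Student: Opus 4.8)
The plan is to use Lemma~\ref{lemma:divergence} as a majorization device and then show that the updates $(p^{t+1},f^{t+1})$ actually minimize (or at least decrease below the current value of) the majorizing upper bound. Concretely, set $\widetilde{p}=p^{t+1}$, $\widetilde{f}=f^{t+1}$, $p=p^t$, $f=f^t$, and write $w=w^t$ in \eqref{eqn:divergence}. It then suffices to prove that the right-hand side of \eqref{eqn:divergence}, viewed as a functional of $(\widetilde{p},\widetilde{f})$, is non-positive at our chosen updates; equivalently, that
\[
\int g(x)\Bigl[(1-w^t(x))\log\tfrac{1-p^{t+1}}{1-p^t}+w^t(x)\log\tfrac{p^{t+1}\mathcal{N}f^{t+1}(x)}{p^t\mathcal{N}f^t(x)}\Bigr]dx\ge 0.
\]
Since $\int g(x)w^t(x)\,dx=p^{t+1}$ and hence $\int g(x)(1-w^t(x))\,dx=1-p^{t+1}$, the terms involving the constants $\log\frac{1-p^{t+1}}{1-p^t}$ and $\log\frac{p^{t+1}}{p^t}$ collect into $(1-p^{t+1})\log\frac{1-p^{t+1}}{1-p^t}+p^{t+1}\log\frac{p^{t+1}}{p^t}$, which is exactly the Kullback--Leibler divergence between the Bernoulli$(p^{t+1})$ and Bernoulli$(p^t)$ laws and is therefore $\ge 0$.

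What remains is the $f$-part, namely $\int g(x)w^t(x)\log\frac{\mathcal{N}f^{t+1}(x)}{\mathcal{N}f^t(x)}\,dx\ge 0$. Here I would unfold the definition $\mathcal{N}f=\exp(\mathcal{S}\log f)$, so that $\log\mathcal{N}f^{t+1}(x)-\log\mathcal{N}f^t(x)=\mathcal{S}\log f^{t+1}(x)-\mathcal{S}\log f^t(x)=\int K_h(x-u)\bigl[\log f^{t+1}(u)-\log f^t(u)\bigr]du$. Substituting and swapping the order of integration (Fubini, using boundedness of $K$ and the fact that the densities are bounded away from $0$ on $[0,1]$), the integral becomes
\[
\int \Bigl(\int K_h(x-u)g(x)w^t(x)\,dx\Bigr)\bigl[\log f^{t+1}(u)-\log f^t(u)\bigr]du .
\]
By the definition of $f^{t+1}$, the inner integral $\int K_h(u-x)g(x)w^t(x)\,dx$ equals $f^{t+1}(u)/\alpha^{t+1}$ (using symmetry of $K$), so the whole expression equals $\frac{1}{\alpha^{t+1}}\int f^{t+1}(u)\log\frac{f^{t+1}(u)}{f^t(u)}\,du$. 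Since both $f^{t+1}$ and $f^t$ are genuine densities on $[0,1]$, $\int f^{t+1}\log(f^{t+1}/f^t)\,du=KL(f^{t+1},f^t)\ge 0$ by Gibbs' inequality, and $\alpha^{t+1}>0$; hence the $f$-part is non-negative. Adding the two non-negative pieces shows the right-hand side of \eqref{eqn:divergence} is $\le 0$, giving $\ell(p^{t+1},f^{t+1})\le\ell(p^t,f^t)$.

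I expect the main obstacle to be the bookkeeping needed to legitimately apply Fubini's theorem and to identify the inner integral cleanly as a multiple of $f^{t+1}$ --- in particular keeping track of the symmetry of $K$ (so that $K_h(x-u)=K_h(u-x)$), the role of the normalizing constant $\alpha^{t+1}$, and checking that $\mathcal{N}f^t$, $\mathcal{N}f^{t+1}$, $f^t$, $f^{t+1}$ are all strictly positive so that the logarithms and the KL expressions are well defined. These are exactly the points where the standing assumptions (densities on $[0,1]$ bounded below by $\eta>0$, $K$ bounded, continuously differentiable and symmetric) are used. Once those technicalities are in place, the proof is simply ``majorize via Lemma~\ref{lemma:divergence}, then recognize two Kullback--Leibler terms, both $\ge 0$.''
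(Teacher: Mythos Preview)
Your argument is correct and follows essentially the same route as the paper: apply Lemma~\ref{lemma:divergence} with $(p,f)=(p^t,f^t)$, split the right-hand side into a $\widetilde{p}$-part and a $\widetilde{f}$-part, and use Fubini together with the identity $\int K_h(u-x)g(x)w^t(x)\,dx=f^{t+1}(u)/\alpha^{t+1}$ to reduce the $\widetilde{f}$-part to a scaled Kullback--Leibler term. The only cosmetic difference is that the paper characterizes $(p^{t+1},f^{t+1})$ as the \emph{minimizer} of the majorizer and then notes the minimum is $\le 0$, whereas you plug in $(p^{t+1},f^{t+1})$ directly and recognize the result as a sum of two KL divergences (Bernoulli$(p^{t+1})$ vs.\ Bernoulli$(p^t)$, and $f^{t+1}$ vs.\ $f^t$); your variant is slightly more direct but yields the same conclusion from the same ingredients.
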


\begin{proof}[Proof of Theorem \ref{descent:property}]

By Lemma \ref{lemma:divergence}, for an arbitrary density function $\widetilde f$ and an arbitrary number $0<\widetilde p<1$
\begin{align}\label{descent:property:eqn1}
&\ell(\widetilde{p},\widetilde{f})-\ell(p^t,f^t)\\
&\le -\int g(x)\left[(1-w^t(x))\log\left(\frac{1-\widetilde{p}}{1-p^t}\right)
+w^t(x)\log\left(\frac{\widetilde{p}\mathcal{N}\widetilde{f}(x)}{p^t\mathcal{N}f^t(x)}\right)\right]dx\nonumber.
\end{align}

Let $(\widehat{p},\widehat{f})$ be the minimizer of the right hand side of (\ref{descent:property:eqn1}) with respect to $\widetilde p$ and $\widetilde f$. Note that the right-hand side becomes zero when $\widetilde{p}=p^{t}$ and $\widetilde{f}=f^{t}$; therefore, the minimum value of the functional on the right hand side must be less then or equal to $0$. Therefore, it is clear that $\ell(\widehat{p},\widehat{f})\le\ell(p^t,f^t)$. To verify that the statement of the theorem \eqref{descent:property} is true, it remains only to show that $(\widehat{p},\widehat{f})=(p^{t+1},f^{t+1})$.

Note that the right hand side of (\ref{descent:property:eqn1}) can be rewritten as
\begin{align*}
&-\int g(x)[(1-w^t(x))\log(1-\widetilde{p})+w^t(x)\log\widetilde{p}]dx\\
&-\int g(x)w^t(x)\log\mathcal{N}\widetilde{f}(x)dx+T,
\end{align*}
where the term $T$ only depends on $(p^t,f^t)$. The first integral in the above only depends on 
$\widetilde{p}$ but not on $\widetilde{f}$. It is easy to see that the minimizer of this first integral with respect to $\widetilde{p}$ is $\widehat{p}=\int g(x)w^t(x)dx$. The second integral, on the contrary, depends only on $\widetilde{f}$ but not on $\widetilde{p}$. It can be rewritten as
\begin{align*}
&-\int g(x)w^t(x)\log\mathcal{N}\widetilde{f}(x)dx=-\int\int g(x)w_t(x)K_h(x-u)\log\widetilde{f}(u)dudx\\
&=-\int\left(\int K_h(u-x)g(x)w^t(x)dx\right)\log\widetilde{f}(u)du\\
&=-\frac{1}{\alpha^{t+1}}\int f^{t+1}(u)\log\widetilde{f}(u)du\\
&=\frac{1}{\alpha^{t+1}}\int f^{t+1}(u)\log\frac{f^{t+1}(u)}{\widetilde{f}(u)}du-\frac{1}{\alpha^{t+1}}
\int f^{t+1}(u)\log f^{t+1}(u)du.
\end{align*}
The first term in the above is the Kullback-Leibler divergence between $f^{t+1}$ and $\widetilde{f}$
scaled by $\alpha^{t+1}$, which is minimized at $f^{t+1}$,
i.e., for $\widehat{f}=f^{t+1}$. Since the second term does not depend on $\widetilde{f}$ at all, we arrive at the needed conclusion.
\end{proof}

The above suggests that the following algorithm can be used to estimate the parameters of the model \eqref{model2}. First, we start with initial values $p_{0}, f^{0}$ at the step $t=0$. Then, for any $t=1,2,\ldots$

\begin{itemize}
\item Define the weight 
\begin{equation}\label{weight_eq}
w^{t}(x)=\frac{p^{t}{\cal N}f^{t}(x)}{(1-p^{t})f_{0}(x)+p^{t}{\cal N}f^{t}(x)} 
\end{equation}
 \item Define the updated probability 
 \begin{equation}\label{p_eq}
p^{t+1}=\int g(x)w^t(x)dx
\end{equation}
\item Define
\begin{equation}\label{func_eq}
f^{t+1}(u)=\alpha^{t+1}\int K_h(u-x)g(x)w^t(x)dx
\end{equation}
\end{itemize} 
\begin{Remark}
Note that the proposed algorithm is an MM (majorization-minimization) and not a true EM algorithm.  MM algorithms are commonly used whenever optimization of a difficult objective function is best avoided and a series of simpler objective functions is optimized instead. A general introduction to MM algorithms is available in, for example,  \cite{hunter2004tutorial}. As a first step, let $(p^{t},f^{t})$ denote the current parameter values in our iterative algorithm. The main goal is to obtain a new functional $b^{t}(p,f)$ such that, when shifted by a constant, it majorizes $\ell(p,f)$. In other words, there must exist a constant $C^{t}$ such that, for any $(p,f)$ $b^{t}(p,f)+C^{t}\ge \ell(p,f)$ with equality when $(p,f)=(p^{t},f^{t})$. The use of $t$ as a superscript in this context indicates that the definition of the new functional $b^{t}(p,f)$ depends on the parameter values $(p^{t},f^{t})$; these change from one iteration to the other. 

In our case, we define a functional 
\begin{align}\label{maj_func}
& b^{t}(\tilde p,\tilde f)=-\int g(x)[(1-\omega^{t}(x))\log (1-\tilde p)+\omega^{t}(x)\log \tilde p]\,dx\\
&-\int g(x)\omega^{t}(x)\log {\cal N}\tilde f(x)\,dx\nonumber;
\end{align}
note that the dependence on $f^{t}$ is through weights $\omega^{t}$. 
From the proof of the Theorem \eqref{descent:property}, it follows that, for any argument $(\tilde p,\tilde f)$ we have 
\[
\ell(\tilde p,\tilde f) -\ell(p^{t},f^{t})\le b^{t}(\tilde p,\tilde f)-b^{t}(p^{t},f^{t}).
\]
This means, that $b^{t}(\tilde p,\tilde f)$ is a majorizing functional; indeed, it is enough to select the constant $C^{t}$ such that $C^{t}= \ell(p^{t},f^{t})-b^{t}(p^{t},f^{t})$. In the proof of the Theorem \eqref{descent:property} it is the series of functionals $b^{t}(\tilde p,\tilde f)$ (note that they are different at each step of iteration) that is being minimized with respect to $(\tilde p,\tilde f)$, and not the original functional $\ell(\tilde p,\tilde f)$. This, indeed, establishes that our algorithm is an MM algorithm. 
\end{Remark}
The following Lemma shows that the sequence $\xi_{t}=\ell(p^t,f^t)$, defined by our algorithm, also has a non-negative limit (which is not necessarily a global minimum of $\ell(p,f)$). 
\begin{Lemma} \label{lemma:functional_positivity}
There exists a finite limit of the  sequence $\xi_{t}=\ell(p^t,f^t)$ as $t\rightarrow \infty$:
\[
L:=\lim_{t\rightarrow\infty}\xi_{t}
\]
for some $L\ge 0$.
\end{Lemma}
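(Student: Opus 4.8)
The plan is to show that $\{\xi_t\}$ is a non-increasing sequence of real numbers that is bounded below by $0$, and then invoke the monotone convergence theorem for sequences of reals. Monotonicity is immediate from Theorem~\ref{descent:property}, which gives $\xi_{t+1}=\ell(p^{t+1},f^{t+1})\le\ell(p^t,f^t)=\xi_t$ for every $t\ge0$. The finiteness of the whole sequence then reduces to finiteness of $\xi_0$: since the sequence is non-increasing, $\xi_t\le\xi_0$ for all $t$, and one chooses (or assumes, as is done implicitly for the algorithm to make sense) the initial pair $(p^0,f^0)\in\mathcal{D}(A)$ with $\ell(p^0,f^0)<\infty$; the existence result (Theorem~\ref{thm:Existence}) guarantees such a pair exists whenever there is any $\bar f$ with $Q_p(g,A\bar f)<\infty$.

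The second ingredient is the lower bound $\xi_t\ge0$. This follows directly from the representation \eqref{main.problem},
\[
\ell(p,f)=KL\bigl(g(x),(1-p)f_0(x)+p\mathcal{N}f(x)\bigr)+p\left\{1-\int\mathcal{N}f(x)\,dx\right\},
\]
by observing that each of the two summands is non-negative. The Kullback--Leibler term is non-negative by the standard property $KL(a,b)\ge0$ for positive integrable $a,b$ recalled just before \eqref{main.problem}. The penalty term is non-negative because $p\in(0,1)$ and, by the Jensen/Fubini argument given right after the definition of $\mathcal{N}$, one has $\int\mathcal{N}f(x)\,dx\le\int f(x)\,dx=1$, so $1-\int\mathcal{N}f(x)\,dx\ge0$. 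Applying this with $(p,f)=(p^t,f^t)$ (which lies in $\mathcal{D}(A)$ by construction of the iterates) yields $\xi_t\ge0$ for all $t$.

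Combining the two facts, $\{\xi_t\}$ is bounded below by $0$ and non-increasing, hence it converges; setting $L:=\lim_{t\to\infty}\xi_t=\inf_{t\ge0}\xi_t$ gives a finite limit with $L\ge0$, which is exactly the assertion of the Lemma. There is essentially no hard step here: the only points requiring a word of care are (i) checking that the iterates $(p^t,f^t)$ genuinely remain in $\mathcal{D}(A)$ so that \eqref{main.problem} applies term by term — in particular that $f^{t+1}$ defined by \eqref{func_eq} is a bona fide density bounded below by $\eta$ and with bounded $L_2$-norm, which is where the assumptions on $K$ and $g$ enter — and (ii) ensuring $\xi_0<\infty$, which is the only substantive hypothesis and is supplied by Theorem~\ref{thm:Existence}.
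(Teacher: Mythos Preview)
Your proof is correct and follows essentially the same approach as the paper: monotonicity from Theorem~\ref{descent:property} plus the lower bound $\xi_t\ge 0$ obtained from the decomposition \eqref{main.problem} together with $KL\ge 0$ and the Jensen/Fubini inequality $\int\mathcal{N}f\le 1$. Your additional remarks about finiteness of $\xi_0$ and membership of the iterates in $\mathcal{D}(A)$ are extras the paper does not address; note, though, that \eqref{main.problem} only needs $f^t$ to be a density (ensured by the normalizing constant $\alpha^{t+1}$), not full membership in $\mathcal{D}(A)$.
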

\begin{proof}[Proof of Lemma \eqref{lemma:functional_positivity}]
First, note that $\xi_{t}$  is a non-increasing sequence for any integer $t$ due to the Theorem \eqref{descent:property}. Thus, if we can show that it is bounded from below by zero, the proof will be finished.  Then, the functional $\ell(p^t,f^t)$ can be represented as 
\begin{align*}
&\ell(p^t,f^t)=KL(g(x), (1-p^t)f_{0}(x)+p^t{\cal N}f^t(x))+\int g(x)\,dx\\
&-\int [(1-p^t)f_{0}(x)+p^t {\cal N}f^t(x)]\,dx\\
&=KL(g(x), (1-p^t)f_{0}(x)+p^t{\cal N}f^t(x))+1-(1-p^t)-p^t\int {\cal N}f^t(x)\,dx\\
&= KL(g(x),(1-p^t)f_{0}(x)+p^t{\cal N}f^t(x))+p^t\left[1-\int {\cal N}f^t(x)\,dx\right]
\end{align*}
Now, since $K$ is a proper density function, by Jensen's inequality, 
\begin{align*}
&{\cal N}f^t(x)=\exp{\left\{\int K_{h}(x-u)\log f^t(u)\,du\right\}}\\
&\le \int K_{h}(x-u)f_t(u)\,du\equiv {\mathcal S}f^t(x).
\end{align*} Moreover, using Fubini's theorem, one can easily show that $\int {\mathcal S}f^t(x)\,dx=1$ since $f^t$ is a proper density function. Therefore, one concludes easily that $\int {\cal N}f^t(x)\,dx\le \int {\mathcal S}f^t(x)\,dx=1$. Thus,  $\ell(p^t,f^t)\ge 0$ is non-negative due to non-negativity of the Kullback-Leibler distance.  
\end{proof} It is, of course, not clear directly from the \eqref{lemma:functional_positivity} if the sequence $(p^{t},f^{t})$, generated by this algorithm, also converges. Being able to answer this question requires establishing a lower semicontinuity property of the functional $\ell(p,f)$. Some additional requirements have to be imposed on the kernel function $K$ in order to obtain the needed result that is given below. We denote $\Delta$ the domain of the kernel function $K$.   
\begin{theorem}\label{theorem:lower_semicontinuity}
Let the kernel $K:\Delta\rightarrow \bR$ be bounded from below and Lipschitz continuous with the Lipschitz constant $C_{K}$.  Then, the minimizing sequence $(p^{t},f^{t})$ converges to $(p^{*}_{h},f^{*}_{h})$ that depends on the bandwidth $h$ such that $L=l(p^{*}_{h},f^{*}_{h})$.   
\end{theorem}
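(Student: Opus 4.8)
The plan is to prove the statement in four steps: (i) establish weak lower semicontinuity of the objective $\ell$; (ii) show the iterates $\{f^{t}\}$ lie in a sequentially compact set and that consecutive iterates become arbitrarily close; (iii) identify every limit point of $(p^{t},f^{t})$ as a fixed point of the update map whose objective value equals $L$; and (iv) upgrade ``every limit point'' to ``the limit''. Step (iv) is the real obstacle; steps (i)--(iii) are bookkeeping on top of Lemma~\ref{lemma:divergence}, the descent Theorem~\ref{descent:property}, Lemma~\ref{lemma:functional_positivity}, and the already-verified Assumptions~\ref{assumption}.

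For step (i), I would write $\ell(p,f)=KL(g,Af)+p\,\Omega(f)$ with $Af=(1-p)f_{0}+p\,\mathcal{N}f$ and note that $(p,f)\mapsto Af$ is jointly continuous for $p\to p$ and $f\rightharpoonup f$: it is affine in $p$, and the computation behind Assumption~\ref{assumption} already shows $\mathcal{N}f_{k}\rightharpoonup\mathcal{N}f$ whenever $f_{k}\rightharpoonup f$ in $\mathcal{D}(A)$. Combining this with the weak lower semicontinuity of the Kullback--Leibler fitting functional and of $\Omega$ (Assumption~\ref{assumption}) yields joint weak lower semicontinuity of $\ell$. On the smaller set $\mathcal{D}(A)\cap\{\eta\le f\le F\}$ equipped with \emph{uniform} convergence of $f$ one actually gets continuity of $\ell$, by dominated convergence (the integrand of $KL(g,Af)$ is then bounded above and below by integrable functions); this stronger statement is what I will use at the end of step (iii).

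For step (ii), the Lipschitz hypothesis on $K$ is what does the work: $K_{h}$ is Lipschitz with constant $C_{K}/h^{2}$, and since $f^{t+1}(u)=\alpha^{t+1}\!\int K_{h}(u-x)g(x)w^{t}(x)\,dx$ with $\alpha^{t+1}\!\int g(x)w^{t}(x)\,dx=1$, the family $\{f^{t}\}_{t\ge1}$ is equi-Lipschitz with a $t$-free constant; the lower bound on $K$ together with the (automatic) upper bound $K\le\mathcal{K}$ gives $\eta\le f^{t}\le F$ uniformly in $t$, so $\{f^{t}\}\subset\mathcal{D}(A)$ and, by Arzel\`{a}--Ascoli, $\{f^{t}\}$ is precompact in $C([0,1])$. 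For asymptotic regularity, I would apply Lemma~\ref{lemma:divergence} with $(\widetilde p,\widetilde f)=(p^{t+1},f^{t+1})$ and run the decomposition in the proof of Theorem~\ref{descent:property} (recognising $b^{t}(p^{t},f^{t})-b^{t}(p^{t+1},f^{t+1})$ on the right-hand side) to obtain
\[
\xi_{t}-\xi_{t+1}\ \ge\ \left[(1-p^{t+1})\log\frac{1-p^{t+1}}{1-p^{t}}+p^{t+1}\log\frac{p^{t+1}}{p^{t}}\right]+\frac{1}{\alpha^{t+1}}\,KL(f^{t+1},f^{t})\ \ge\ 0 .
\]
Since $\xi_{t}\downarrow L$ (Lemma~\ref{lemma:functional_positivity}), the right-hand side is summable, so the Bernoulli-type term and $\alpha^{-(t+1)}KL(f^{t+1},f^{t})$ both tend to $0$; because $p^{t}$ and $\alpha^{t}$ stay in compact subsets of $(0,1)$ and $(0,\infty)$ (which one verifies from the uniform bounds on $f^{t}$ together with boundedness of $f_{0}$), this forces $|p^{t+1}-p^{t}|\to0$ and, by Pinsker's inequality, $\|f^{t+1}-f^{t}\|_{1}\to0$, which together with equi-Lipschitzness upgrades to $\|f^{t+1}-f^{t}\|_{\infty}\to0$.

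For step (iii), note that the update map $M:(p,f)\mapsto(p^{+},f^{+})$ defined by \eqref{weight_eq}, \eqref{p_eq}, \eqref{func_eq} is continuous from $\mathcal{D}(A)\cap\{\eta\le f\le F\}$ into itself in the uniform topology. If $(p^{t_{k}},f^{t_{k}})\to(p^{*}_{h},f^{*}_{h})$ along a subsequence, then asymptotic regularity gives $(p^{t_{k}+1},f^{t_{k}+1})\to(p^{*}_{h},f^{*}_{h})$ as well, and continuity of $M$ yields $(p^{*}_{h},f^{*}_{h})=M(p^{*}_{h},f^{*}_{h})$, i.e.\ the limit point is a fixed point of the algorithm (a stationary point of $\ell$); continuity of $\ell$ on this set then gives $\ell(p^{*}_{h},f^{*}_{h})=\lim_{k}\xi_{t_{k}}=L$. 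Finally, for step (iv), the classical observation is that the limit set $\Lambda$ of $(p^{t},f^{t})$ is nonempty, compact, and \emph{connected} (consecutive iterates become arbitrarily close while remaining in a compact set), and $\Lambda$ is contained in the set of fixed points of $M$ on which $\ell\equiv L$. The hard part will be to show that such fixed points are isolated --- which I would attempt via an implicit-function-theorem analysis of the stationarity equations at fixed $h$ --- so that a connected subset of them is forced to be a single point; this is also where the dependence of the limit on the bandwidth $h$ enters.
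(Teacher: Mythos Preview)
Your route is genuinely different from, and considerably more elaborate than, the paper's. The paper's argument is essentially only the compactness half of your step~(ii) together with a direct lower-semicontinuity computation: it introduces $B=\{\mathcal{S}\phi:\phi\ge0,\ \int\phi=1\}$, observes that each $f^{t}$ (for $t\ge1$) lies in $B$, uses the lower bound and Lipschitz continuity of $K$ to get uniform two-sided bounds and equicontinuity, applies Arzel\`a--Ascoli (and Bolzano--Weierstrass for $p^{t}$) to extract a convergent subsequence $(p^{t_k},f^{t_k})\to(p^{*}_{h},f^{*}_{h})$, then checks lower semicontinuity of $\ell$ along that subsequence by writing the integrand as $g(x)\psi(\cdot)$ with $\psi(s)=-\log s+s-1\ge0$ and invoking Fatou. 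That is the whole proof: no asymptotic regularity, no fixed-point characterisation of limit points, no connectedness-of-the-limit-set argument. The passage from subsequence to full sequence is handled by the sentence ``without loss of generality, assume that the subsequence coincides with the whole sequence'', i.e.\ the paper does not supply the argument your step~(iv) targets.

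So your steps (ii)--(iv)---the sandwich $\xi_{t}-\xi_{t+1}\ge[\text{Bernoulli-}KL]+\alpha^{-(t+1)}KL(f^{t+1},f^{t})$, Pinsker plus equi-Lipschitz to get $\|f^{t+1}-f^{t}\|_{\infty}\to0$, continuity of the update map, and the connected-limit-set/isolated-fixed-point scheme---are all additions beyond the paper. They are the right machinery if one wants a genuine proof of full-sequence convergence, and your decomposition in step~(ii) is correct. The trade-off: the paper obtains a short argument at the cost of leaving the upgrade from subsequential to full convergence essentially asserted, whereas your route would close that gap but hinges on the isolation of fixed points in step~(iv), which you correctly flag as the hard, unfinished part and which the paper simply does not address.
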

\begin{proof}
We prove this result in two parts. First, let us introduce a subset of functions $B=\{{\mathcal S}\phi:0\le \phi\in L_{1}(\Delta), \int \phi=1\}$. Such a subset represents all densities on a closed compact interval that can be represented as linearly smoothed integrable functions. Every function $f_{t}$ generated in our algorithm except, perhaps, the initial one, can clearly be represented in this form. This is because, at every step of iteration, $f^{t+1}(x)=\alpha^{t+1}\int K_{h}(x-u)g(u)w^{t}(u)\,du=\int K_{h}(x-u)\phi(u)\,du$ where $\phi(u)=\alpha^{t+1}g(u)w^{t}(u)$.  Moreover, we observe that $\int \phi(u)\,du=\alpha^{t+1}\int g(u)w^{t}(u)\,du=\alpha^{t+1}p^{t+1}$.  Next, one concludes, by using Fubini theorem  that, for any $t=1,2,\ldots$ 
 \[
 \int f^{t+1}(x)\,dx=\alpha^{t+1}\int g(u)w^{t}(u)\left[\int K_{h}(x-u)\,dx\right]\,du=1.
 \] Since the iteration step $t$ in the above is arbitrary, we established that $\alpha^{t}p^{t}=1$ and, therefore, $\int \phi(u)\,du=1$. Next, since the kernel function $K$ is bounded from below, we can easily claim that for every $f\in B$ $f=\int K_{h}(x-u)\phi(u)\,du\ge \inf_{x\in \Delta} K_{h}(x-u) \int \phi(u)\,du=\inf_{x\in \Delta} K_{h}(x-u)>0$ and, therefore, every function in the set $B$ is bounded from below. If the kernel function is Lipschitz continuous on $\Delta$ it is clearly bounded from above by some positive constant $M:\sup_{x\in \Delta} K(x)<M$. Thus, every function $f\in B$ satisfies $f(x)\le M<\infty$. This implies that the set $B$ is uniformly bounded. Also, by definition of set $B$, for any two points $x,y\in \Delta$ we have 
\[
\vert f(x)-f(y)\vert \le \int |K_{h}(x-u)-K_{h}(y-u)\vert \phi(u)\,du\le C_{K}\vert x-y\vert 
\]
where the constant $C_{K}$ depends on the choice of kernel $K$ but not on the function $f$. This establishes the equicontinuity of the set $B$. Therefore, by Arzela-Ascoli theorem the set of functions $B$ is a compact subset of $C(\Delta)$ with a $\sup$ metric.

Since for every $t=2,3,\ldots$ $f^{t}\in B$, by Arzela-Ascoli theorem we have a subsequence $f^{t_{k}}\rightarrow f^{*}_{h}$ as $k\rightarrow \infty$ uniformly over $\Omega$. Since for every $t=1,2,\ldots$ $p^{t}$ is bounded between $0$ and $1$, there exists, by Bolzano-Weierstrass theorem, a subsequence $p^{t_{k}}\rightarrow p^{*}_{h}$ as $k\rightarrow \infty$ in the usual Euclidean metric. Consider a Cartesian product space $\{(p,f)\}$ where every $p\in [0,1]$ and $f\in C(\Delta)$. To define a metric on such a space we introduce an $m$-product of individual metrics for some non-negative $m$. This means that, if the first component space has a metric $d_{1}$ and the second $d_{2}$, the metric on the Cartesian product is $(\vert d_{1}\vert ^{m}+\vert d_{2}\vert ^{m})^{1/m}$ for some non-negative $m$. For example, the specific case $m=0$ corresponds to $\vert d_{1}\vert +\vert d_{2}\vert$ and $m=\infty$ corresponds to $\max (d_{1},d_{2})$. For such an $m$-product metric, clearly, we have a subsequence $(p^{t_{k}},f^{t_{k}})\rightarrow (p^{*}_{h},f^{*}_{h})$ that converges to $(p^{*}_{h},f^{*}_{h})$ in the $m$-product metric. Without loss of generality, assume that the subsequence coincides with the whole sequence $(p^{t},f^{t})$. Of course, such a sequence $(p^{t},f^{t})\in [0,1]\times C(\Delta)$ for any $t$. 

Now, that we know that there is always a converging sequence $(p^{t},f^{t})$, we can proceed further.
Since each $f^{t}$ is bounded away from zero and from above, then so is the limit function $f^{*}_{h}(x)$ in the limit $(p^{*}_{h},f^{*}_{h})$. This implies that $(p^{t},\log f^{t})\rightarrow (p^{*}_{h},\log f^{*}_{h})$ uniformly in the $m$-product topology as well and the same is true also for $(p^{t},{\mathcal S}\log f^{t})$. Analogously, the uniform convergence follows also in $(p^{t},{\cal N}f^{t})\rightarrow (p^{*}_{h},{\cal N}f^{*}_{h})$; moreover, $(1-p^{t})f_{0}+p^{t}{\cal N}f^{t}\rightarrow (1-p^{*}_{h})f_{0}+p^{*}_{h}{\cal N}f^{*}_{h}$ uniformly in the $m$-product topology. Since the function $\psi(t)=-\log t+t-1\ge 0$, Fatou Lemma implies that 
\begin{align*}
&\int g(x)\psi((1-p^{*}_{h})f_{0}(x)+p^{*}_{h}{\cal N}f^{*}_{h}(x))\,dx\\
&\le \liminf\int g(x)\psi((1-p^{t})f_{0}(x)+p^{t}{\cal N}f^{t}(x))\,dx.
\end{align*}
The lower semicontinuity of the functional $\ell(p,f)$ follows immediately and with it the conclusion of the Theorem \eqref{theorem:lower_semicontinuity}.  
\end{proof}

\section{An empirical version of our algorithm}\label{emp_sec}

In practice, the number of observations $n$ sampled from the target density function $g$ is finite. This necessitates the development of the empirical version of our algorithm that can be implemented in practice. Many proof details here are similar to proofs of properties of the algorithm we introduced in the previous chapter. Therefore, we will be brief in our explanations. Denote the empirical cdf of the observations $X_{i}$, $i=1,\ldots,n$ $G_n(x)$ where $G_{n}(x)=\frac{1}{n}\sum_{i=1}^nI_{X_i\le x}
$. Then, we define a functional 
\begin{align*}\label{emp.func}
&l_n(f,p)=-\int \log ((1-p)f_{0}(x)+p{\cal N}f(x))\,dG_{n}(x)\\
&\equiv -\sum_{i=1}^{n}\log((1-p)f_0(X_{i})+p\mathcal{N}f(X_{i})).
\end{align*}
The following analogue of the Lemma \eqref{lemma:divergence} can be easily established. 
\begin{Lemma}\label{empirical:lemma:divergence}
For any pdf $\widetilde{f}$ and $\widetilde{p}\in(0,1)$,
\begin{align*}
&l_n(\widetilde{f},\widetilde{p})-l_n(f,p)\\
&\le -\int\left[(1-w(x))\log\left(\frac{1-\widetilde{p}}{1-p}\right)+
w(x)\log\left(\frac{\widetilde{p}\mathcal{N}\widetilde{f}(x)}{p\mathcal{N}f(x)}\right)\right]dG_n(x),
\end{align*}
where the weight $w(x)=\frac{p\mathcal{N}f(x)}{(1-p)f_0(x)+p\mathcal{N}f(x)}$.
\end{Lemma}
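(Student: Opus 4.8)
The plan is to mimic the proof of Lemma~\ref{lemma:divergence} line for line, with the integral against $g(x)\,dx$ replaced everywhere by the Lebesgue--Stieltjes integral against the empirical distribution $dG_n$. Since $G_n$ is itself a probability measure, this substitution is purely formal and introduces no new ingredient.

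First I would expand the left-hand side directly from the definition of $l_n$:
\[
l_n(\widetilde f,\widetilde p)-l_n(f,p)=-\int\log\!\left(\frac{(1-\widetilde p)f_0(x)+\widetilde p\,{\cal N}\widetilde f(x)}{(1-p)f_0(x)+p\,{\cal N}f(x)}\right)dG_n(x).
\]
Unlike in the continuous case there is no $\log g$ term, so the two logarithms forming $l_n(\widetilde f,\widetilde p)$ and $l_n(f,p)$ combine at once into the logarithm of a single ratio. Next, exactly as in Lemma~\ref{lemma:divergence}, I would insert the weight $w(x)=\frac{p\,{\cal N}f(x)}{(1-p)f_0(x)+p\,{\cal N}f(x)}$ together with $1-w(x)=\frac{(1-p)f_0(x)}{(1-p)f_0(x)+p\,{\cal N}f(x)}$, which rewrites the ratio inside the logarithm as the convex combination
\[
(1-w(x))\,\frac{1-\widetilde p}{1-p}+w(x)\,\frac{\widetilde p\,{\cal N}\widetilde f(x)}{p\,{\cal N}f(x)}.
\]
Applying Jensen's inequality to the convex function $-\log$ \emph{pointwise in $x$} (for each fixed $x$ the coefficients $w(x)$ and $1-w(x)$ are nonnegative and sum to one) and then integrating the resulting pointwise inequality against the probability measure $dG_n$ yields precisely the asserted bound.

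The hard part --- such as it is --- is only bookkeeping: one must note that $\widetilde f$ and $f$ are strictly positive at the data points $X_1,\dots,X_n$, so that ${\cal N}\widetilde f(X_i),{\cal N}f(X_i)>0$ and every logarithm above is well defined; this is guaranteed by the standing lower-bound assumption on densities in ${\cal D}(A)$. One also uses that $f_0>0$ and $p\in(0,1)$, so that $w(X_i)\in(0,1)$ and the convex combination is genuine. Because the convexity step is performed before integration, the discreteness of $G_n$ plays no role, and the argument is otherwise identical to that of Lemma~\ref{lemma:divergence}.
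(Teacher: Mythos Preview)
Your proposal is correct and follows exactly the approach the paper intends: the paper itself omits the proof, stating that it is ``almost exactly the same as the proof of the Lemma~\ref{lemma:divergence},'' and what you have written is precisely that argument with $g(x)\,dx$ replaced by $dG_n(x)$.
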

The proof is omitted since it is almost exactly the same as the proof of the Lemma \eqref{lemma:divergence}.

Now we can define the empirical version of our algorithm. Denote $(p_{n}^t,f_{n}^t)$ values of the density $f$ and probability $p$ at the iteration step $t$. Define the weights as $w_{n}^t(x)=\frac{p_{n}^t\mathcal{N}f_{n}^t(x)}{(1-p_{n}^t)f_0(x)+p_{n}^t\mathcal{N}f_{n}^t(x)}$. We use the subscript $n$ everywhere intentionally to stress that these quantities depend on the sample size $n$. For the next step, define $(p_{n}^{t+1},f_{n}^{t+1})$ as
\begin{align}
&p_{n}^{t+1}=\int w_{n}^{t}(x)dG_n(x)=\frac{1}{n}\sum_{i=1}^n w_{n}^{t}(X_i) \nonumber\\ 
&f_{n}^{t+1}(x)=\alpha_{n}^{t+1}\int K_h(x-u)w_{n}^t(u)dG_n(u) \nonumber\\ 
&=\frac{\alpha_{n}^{t+1}}{n}\sum_{i=1}^n K_h(x-X_i)w_{n}^{t}(X_i)\nonumber
\end{align}
where $\alpha_{n}^{t+1}$ is a normalizing constant such that $f_{n}^{t+1}$ is a valid pdf.
Since $\int K_h(X_i-u)du=1$ for $i=1,\ldots,n$, we get
\[
1=\int f_{n}^{t+1}(u)du=\frac{\alpha_{n}^{t+1}}{n}\sum_{i=1}^nw_{n}^{t}(X_i),
\]
and hence,
\[
\alpha_{n}^{t+1}=\frac{n}{\sum_{i=1}^n w_{n}^{t}(X_i)}.
\]
The following result establishes the descent property of the empirical version of our algorithm. 
\begin{theorem}\label{empirical:descent:property}
For any $t\ge0$, $\ell_n(p_{n}^{t+1},f_{n}^{t+1})\le\ell_n(p_{n}^{t},f_{n}^{t})$.
\end{theorem}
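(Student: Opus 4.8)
The plan is to transcribe the proof of Theorem \ref{descent:property} essentially verbatim, with the integral $\int \cdot\, g(x)\,dx$ replaced throughout by the integral $\int \cdot\, dG_n(x)$ against the empirical distribution. As before, the engine is that minimizing the majorizing functional splits into an elementary one-dimensional problem in $\widetilde{p}$ and a Kullback--Leibler minimization in $\widetilde{f}$, both solved in closed form precisely by the updates defining $p_n^{t+1}$ and $f_n^{t+1}$.

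First I would apply Lemma \ref{empirical:lemma:divergence} with $(f,p)=(f_n^{t},p_n^{t})$ and the weights $w_n^{t}$, which gives, for every density $\widetilde{f}$ bounded away from zero and every $\widetilde{p}\in(0,1)$,
\begin{align*}
\ell_n(\widetilde{f},\widetilde{p})-\ell_n(f_n^{t},p_n^{t})
&\le -\int\left[(1-w_n^{t}(x))\log\frac{1-\widetilde{p}}{1-p_n^{t}}
+w_n^{t}(x)\log\frac{\widetilde{p}\,\mathcal{N}\widetilde{f}(x)}{p_n^{t}\,\mathcal{N}f_n^{t}(x)}\right]dG_n(x)
=:R_n^{t}(\widetilde{p},\widetilde{f}).
\end{align*}
The right-hand side $R_n^{t}$ is the empirical counterpart of the majorizing functional, shifted by a constant; it vanishes at $(\widetilde{p},\widetilde{f})=(p_n^{t},f_n^{t})$, so its infimum over $(\widetilde{p},\widetilde{f})$ is $\le 0$. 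Hence it suffices to show that this infimum is attained at $(p_n^{t+1},f_n^{t+1})$.

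To that end I would decompose $R_n^{t}$, up to an additive term depending only on $(p_n^{t},f_n^{t})$, into the piece $-\int[(1-w_n^{t}(x))\log(1-\widetilde{p})+w_n^{t}(x)\log\widetilde{p}]\,dG_n(x)$, which depends on $\widetilde{p}$ only, plus the piece $-\int w_n^{t}(x)\log\mathcal{N}\widetilde{f}(x)\,dG_n(x)$, which depends on $\widetilde{f}$ only. Differentiating the first piece in $\widetilde{p}$ (or invoking the Gibbs inequality for the two-point distribution) shows it is minimized at $\widetilde{p}=\int w_n^{t}(x)\,dG_n(x)=p_n^{t+1}$. For the second piece I would write $\log\mathcal{N}\widetilde{f}(x)=\mathcal{S}\log\widetilde{f}(x)=\int K_h(x-u)\log\widetilde{f}(u)\,du$, swap the finite sum over the data points with the $u$-integral (the Fubini step is immediate for the empirical measure), use the symmetry of $K$ to identify $\int K_h(x-u)w_n^{t}(x)\,dG_n(x)=f_n^{t+1}(u)/\alpha_n^{t+1}$, and obtain
\begin{align*}
-\int w_n^{t}(x)\log\mathcal{N}\widetilde{f}(x)\,dG_n(x)
&=\frac{1}{\alpha_n^{t+1}}\int f_n^{t+1}(u)\log\frac{f_n^{t+1}(u)}{\widetilde{f}(u)}\,du
-\frac{1}{\alpha_n^{t+1}}\int f_n^{t+1}(u)\log f_n^{t+1}(u)\,du.
\end{align*}
Since $f_n^{t+1}$ and $\widetilde{f}$ are densities, the first integral equals $KL(f_n^{t+1},\widetilde{f})\ge 0$, minimized at $\widetilde{f}=f_n^{t+1}$, and the second does not involve $\widetilde{f}$. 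Therefore the infimum of $R_n^{t}$ is attained at $(p_n^{t+1},f_n^{t+1})$, and since that infimum is $\le 0$ we get $\ell_n(p_n^{t+1},f_n^{t+1})-\ell_n(p_n^{t},f_n^{t})\le 0$.

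I do not expect a genuine obstacle: this is a line-by-line copy of the proof of Theorem \ref{descent:property}, and passing to the empirical measure only simplifies matters, since the Fubini interchange becomes a swap of a finite sum with an integral. The two points worth a remark are that $\mathcal{N}\widetilde{f}$ must be well-defined, so one should restrict attention to $\widetilde{f}\in\mathcal{D}(A)$ and note that $f_n^{t+1}\in\mathcal{D}(A)$ because $K>0$ forces $f_n^{t+1}$ to be bounded away from zero on $[0,1]$; and that the update stays inside $(0,1)$, i.e.\ $p_n^{t+1}\in(0,1)$, which holds as soon as $p_n^{t}\in(0,1)$ and both $f_0$ and $\mathcal{N}f_n^{t}$ are positive at the sample points, so that $w_n^{t}$ is neither $G_n$-a.s.\ $0$ nor $G_n$-a.s.\ $1$.
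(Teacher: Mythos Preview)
Your proposal is correct and is precisely the approach the paper intends: it explicitly omits the proof, noting that it ``follows very closely the proof of the Theorem \ref{descent:property},'' and your line-by-line transcription with $g(x)\,dx$ replaced by $dG_n(x)$ is exactly that argument. The minor well-definedness remarks you add about $\mathcal{N}\widetilde{f}$ and $p_n^{t+1}\in(0,1)$ are sensible but not required beyond what the population version already assumes.
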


The proof of this result follows very closely the proof of the Theorem \eqref{descent:property} and is also omitted for brevity. 

\begin{Remark}
As before, the empirical version of the proposed algorithm is an MM (majorization - minimization) and not a true EM algorithm.
More specifically, we can show that there exists another functional $b^{t}_n(p,f)$ such that, when shifted by a constant, it majorizes $l_{n}(p,f)$. It is easy to check that such a functional is 
\begin{align}\label{maj_func_emp}
& b^{t}_n(\tilde p,\tilde f)=-\int [(1-\omega^{t}_{n}(x))\log (1-\tilde p)+\omega^{t}_{n}(x)\log \tilde p]\,dG_{n}(x)\\
&-\int \omega^{t}_{n}(x)\log {\cal N}\tilde f(x)\,dG_{n}(x)\nonumber. 
\end{align}
Note that in the proof of the Theorem \eqref{empirical:descent:property} it is the series of functionals $b^{t}_{n}(\tilde p,\tilde f)$ that is being minimized with respect to $(\tilde p,\tilde f)$, and not the original functional $l_{n}(\tilde p,\tilde f)$.

\end{Remark}

As before, we can also show that the sequence $\ell_{n}(p_{n}^{t},f_{n}^{t})$ generated by our algorithm does not only possess the descent property but is also bounded from below. 
\begin{Lemma} \label{lemma:empirical_limit}
There exists a finite limit of the  sequence $\xi_{n}^{t}=\ell_n(p_{n}^{t},f_{n}^{t})$ as $t\rightarrow \infty$:
\[
L_{n}=\lim_{t\rightarrow\infty}\xi_{n}^{t}
\]
for some $L_{n}\ge 0$.
\end{Lemma}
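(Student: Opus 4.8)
The plan is to argue exactly as in the proof of Lemma~\ref{lemma:functional_positivity}. By Theorem~\ref{empirical:descent:property} the sequence $\xi_n^{t}=\ell_n(p_n^{t},f_n^{t})$ is non-increasing in $t$, so it suffices to exhibit a finite lower bound for $\xi_n^{t}$ that is independent of $t$; a non-increasing real sequence that is bounded below converges to a finite limit, and that limit is the asserted $L_n$.

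The one step that needs care is this uniform lower bound, and the recursion for the normalizing constants is what makes it work. First I would use the identity $\alpha_n^{t}p_n^{t}=1$, valid for every $t\ge 1$ and following at once from the definitions of $p_n^{t}$ and $\alpha_n^{t}$. Since $K\le\mathcal{K}$, we have $K_h\le\mathcal{K}/h$, so for all $x$ and all $t\ge 1$
\[
f_n^{t}(x)=\frac{\alpha_n^{t}}{n}\sum_{i=1}^{n}K_h(x-X_i)\,w_n^{t-1}(X_i)\le\frac{\mathcal{K}}{h}\cdot\frac{\alpha_n^{t}}{n}\sum_{i=1}^{n}w_n^{t-1}(X_i)=\frac{\mathcal{K}}{h}\,\alpha_n^{t}p_n^{t}=\frac{\mathcal{K}}{h}.
\]
Then $\log f_n^{t}\le\log(\mathcal{K}/h)$, and since $K_h(x-\cdot)$ integrates to one, $\mathcal{S}\log f_n^{t}(x)\le\log(\mathcal{K}/h)$, hence
\[
\mathcal{N}f_n^{t}(x)=\exp\!\big(\mathcal{S}\log f_n^{t}(x)\big)\le\frac{\mathcal{K}}{h}\qquad\text{for all }x,\ t\ge 1;
\]
alternatively one may pass through the Jensen bound $\mathcal{N}f_n^{t}\le\mathcal{S}f_n^{t}$ already used in Lemma~\ref{lemma:functional_positivity} together with $\mathcal{S}f_n^{t}(x)\le(\mathcal{K}/h)\int f_n^{t}=\mathcal{K}/h$. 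Consequently, at each (fixed) observed point $X_i$ the denominator inside $\ell_n$ obeys $(1-p_n^{t})f_0(X_i)+p_n^{t}\mathcal{N}f_n^{t}(X_i)\le f_0(X_i)+\mathcal{K}/h<\infty$, so
\[
\xi_n^{t}=-\sum_{i=1}^{n}\log\!\big((1-p_n^{t})f_0(X_i)+p_n^{t}\mathcal{N}f_n^{t}(X_i)\big)\ \ge\ -\sum_{i=1}^{n}\log\!\Big(f_0(X_i)+\frac{\mathcal{K}}{h}\Big)\ =:\ -C_n,
\]
a finite constant independent of $t$. Hence $\{\xi_n^{t}\}$ is non-increasing and bounded below, $L_n:=\lim_{t\to\infty}\xi_n^{t}$ exists and is finite, with $-C_n\le L_n\le\xi_n^{0}$.

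For the non-negativity $L_n\ge 0$ I would mimic the corresponding part of Lemma~\ref{lemma:functional_positivity}: the only extra input needed is that, by Jensen's inequality and Fubini's theorem, $\int\mathcal{N}f_n^{t}(x)\,dx\le\int\mathcal{S}f_n^{t}(x)\,dx=1$, so that $(1-p_n^{t})f_0+p_n^{t}\mathcal{N}f_n^{t}$ is a sub-probability density; the penalized-smoothed-likelihood structure of $\ell_n$ then yields $\xi_n^{t}\ge 0$ for every $t\ge 1$, and letting $t\to\infty$ gives $L_n\ge 0$.

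The only genuine obstacle I anticipate is the uniform-in-$t$ lower bound of the middle step: a priori the iterates $f_n^{t}$, and hence $\mathcal{N}f_n^{t}$, could become increasingly peaked near the finitely many sample points and drive $\ell_n$ to $-\infty$, and it is precisely the normalization $\alpha_n^{t}p_n^{t}=1$ combined with the boundedness of $K$ that rules this out; everything else is a routine transcription of the population-case arguments with $G_n$ in place of $g$.
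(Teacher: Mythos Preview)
Your argument for the existence of a finite limit is correct and in fact more careful than the paper's own treatment: the paper simply says the proof is ``almost exactly the same'' as that of Lemma~\ref{lemma:functional_positivity}, whereas you supply an explicit, $t$-independent lower bound $-C_n$ via the kernel bound $K\le\mathcal{K}$ and the normalization identity $\alpha_n^{t}p_n^{t}=1$. That part is fine and, combined with the descent property, already yields convergence of $\xi_n^{t}$ to a finite $L_n$.

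The gap is in your last paragraph, where you claim that ``the penalized-smoothed-likelihood structure of $\ell_n$ then yields $\xi_n^{t}\ge 0$.'' This is the one place where the transcription from the population case genuinely fails. In Lemma~\ref{lemma:functional_positivity} the non-negativity came from the decomposition
\[
\ell(p,f)=KL\bigl(g,(1-p)f_0+p\mathcal{N}f\bigr)+p\Bigl(1-\int\mathcal{N}f\Bigr),
\]
and the $KL$ term is non-negative only because of the cross-entropy term $\int g\log g$. The empirical functional
\[
\ell_n(p,f)=-\sum_{i=1}^{n}\log\bigl((1-p)f_0(X_i)+p\mathcal{N}f(X_i)\bigr)
\]
has no such term: $G_n$ is a discrete measure, so there is no density to play the role of $g$ in $\int g\log g$, and no analogue of the $KL+$penalty decomposition holds. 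In fact $\ell_n$ need not be non-negative at all: if, say, $f_0(X_i)>1$ for every sample point (perfectly possible for a density supported on a short interval) and $p_n^{t}$ is small, then each summand $-\log\bigl((1-p_n^{t})f_0(X_i)+p_n^{t}\mathcal{N}f_n^{t}(X_i)\bigr)$ is negative and hence $\xi_n^{t}<0$. So the inequality $L_n\ge 0$ cannot be obtained by the route you sketch, and indeed appears to be an inaccuracy in the paper's statement rather than something a faithful adaptation of Lemma~\ref{lemma:functional_positivity} would deliver. Your lower bound $L_n\ge -C_n$ is the correct conclusion here.
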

The proof is almost exactly the same as the proof of the Lemma \eqref{lemma:functional_positivity} and is omitted in the interest of brevity. 
Finally, one can also show that the sequence $(p^{t}_{n},f_{n}^{t})$ generated by our algorithm converges to $(p^{*}_{n}, f^{*}_{n})$ such that $L_{n}=l_{n}(p^{*}_{n},f^{*}_{n})$. The proof is almost the same as that of the Theorem \eqref{theorem:lower_semicontinuity} and is omitted for conciseness. 

\section{Simulations and comparison}\label{sim}

\begin{figure}[!htb]
  \centering
  \subfigure[Fitted mixture density]{
  \includegraphics[width=0.47\columnwidth]{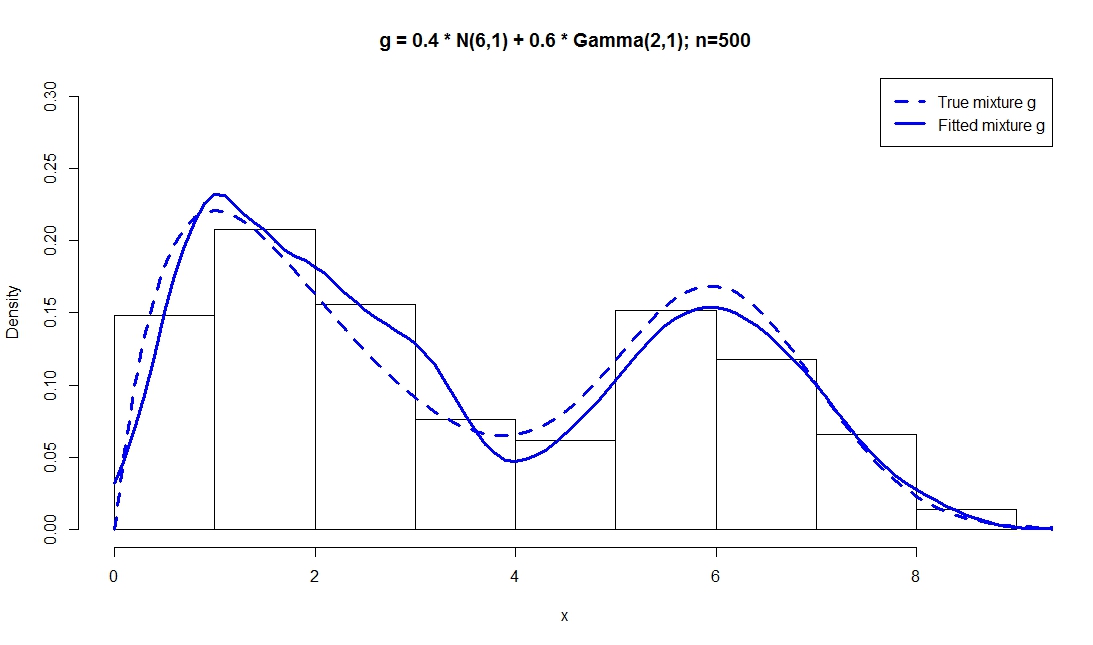}
  \label{fig:Gamma:mixture}}
  \subfigure[Fitted unknown component density]{
  \includegraphics[width=0.47\columnwidth]{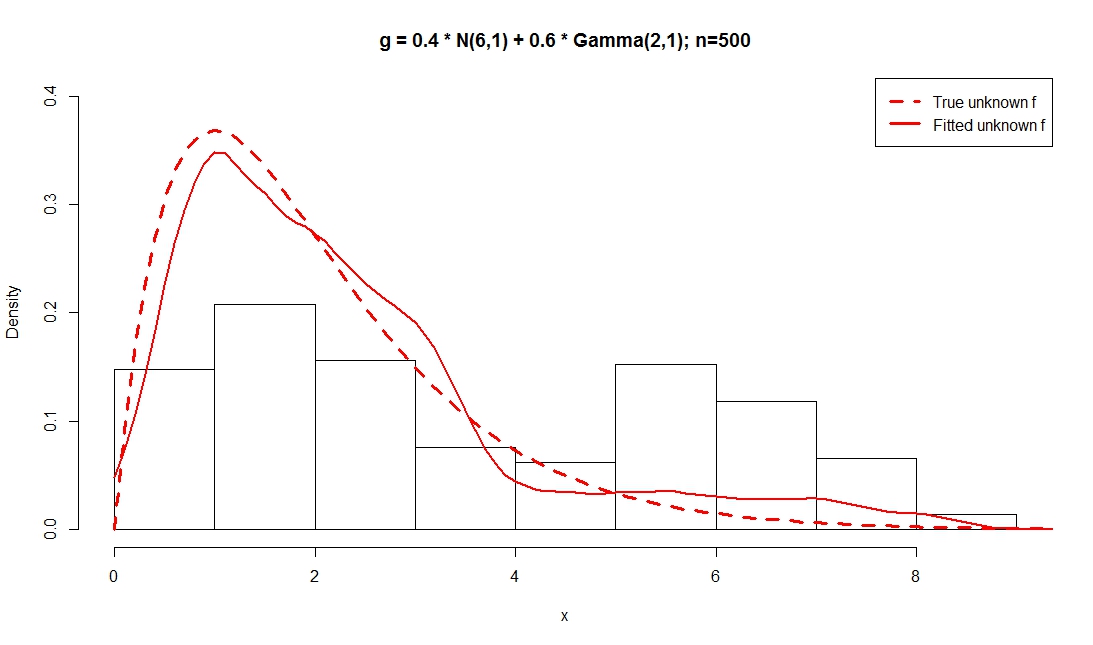}
  \label{fig:Gamma:unknown}}
  \caption{Mixture of Gaussian (6,1) and Gamma (2,1)}
\end{figure}

In this section, we will use the notation $I_{[x>0]}$ for the indicator function of the positive half of the real line and $\phi(x)$ for the standard Gaussian distribution. For our first experiment, we generate $n$ independent and identically distributed observations from a two component normal gamma mixture with the density $g(x)$ defined as $g(x)=(1-p)f_{0}(x)+pf(x).$ Thus, the known component is $f_{0}(x)=\frac{2}{\sigma}\phi\left(\frac{x-\mu}{\sigma}\right)I_{[x>0]}$  while the unknown component is $Gamma(\alpha,\beta)$ , i.e., $f(x)=\frac{\beta^{\alpha}}{\Gamma(\alpha)}x^{\alpha-1}e^{-\beta x}I_{[x>0]}$. Note that we truncate the normal distribution so that it stays on the positive half of the real line. We choose the sample size $n=500$, the probability $p=0.6$, $\mu=6$, $\sigma=1$, $\alpha=2$ and $\beta=1$. The initial weight is $p_0=0.2$ and the initial assumption for the unknown component distribution is $Gamma(4,2)$. The rescaled triangular function $K_h(x)=\frac{1}{h}\left(1-\frac{|x|}{h}\right)I(|x|\le h)$ is used as the kernel function. We use a fixed bandwidth throughout the sequence of iterations and this fixed bandwidth is selected according to the classical Silverman's rule of thumb that we describe here briefly for completeness;  for more details, see \cite{silverman1986density}. Let SD and IQR be the standard deviation and interquartile range of the data, respectively. Then, the bandwidth is determined as $h=0.9\min\left\{SD, \frac{IQR}{1.34}\right\}n^{-1/5}.$ We use the absolute difference $|p_{n}^{t+1}-p_{n}^{t}|$ as a stopping criterion; at every iteration step, we check if this difference is below a small threshold value $d$ that depends on required precision. If it is, the algorithm is stopped. The analogous rule has been described for classical parametric mixtures in \cite{mclachlan2004finite}. In our setting, we use the value $d=10^{-5}$. The computation ends after $259$ iterations, with an estimate $\hat{p}=0.6661$; the Figure \ref{fig:Gamma:mixture} shows the true and estimated mixture density function $g(x)$ while the Figure \ref{fig:Gamma:unknown} shows both true and estimated second component density $f$.  Both figures show a histogram of the observed target distribution $g(x)$ in the background. Both the fitted mixture density $\hat g(x)$ and the fitted unknown component density function $\hat f(x)$ are quite close to their corresponding true density functions everywhere.


We also analyze performance of our algorithm in terms of the mean squared error (MSE) of estimated weight $\hat p$ and the mean integrated squared error (MISE) of $\hat f$. We will use two models for this purpose. The first model is the normal exponential model where the (known) normal component is the same as before while the second (unknown) component is an exponential density function $f(x)=\lambda e^{-\lambda x}I_{[x>0]}$ with $\lambda=0.5$; the value of $p$ used is $p=0.6$. The second model is the same normal-gamma model as before. For each of the two models, we plot MSE of $\hat p$ and MISE of $\hat f$ against the true $p$ for sample sizes $n=500$ and $n=1000$. Here, we use $30$ replications.  The algorithm appears to show rather good performance even for the sample size $n=500$. Note that MISE of the unknown component $f$ seems to decrease with the increase in $p$. Possible reason for this is the fact that, the larger $p$ is, the more likely it is that we are sampling from the unknown component and so the number of observations that are actually generated by $f$ grows; this seems to explain better precision in estimation of $f$ when $p$ is large.

\begin{figure}[!htb]
  \centering
  \subfigure[Normal-Exponential mixture]{
  \includegraphics[width=0.48\columnwidth]{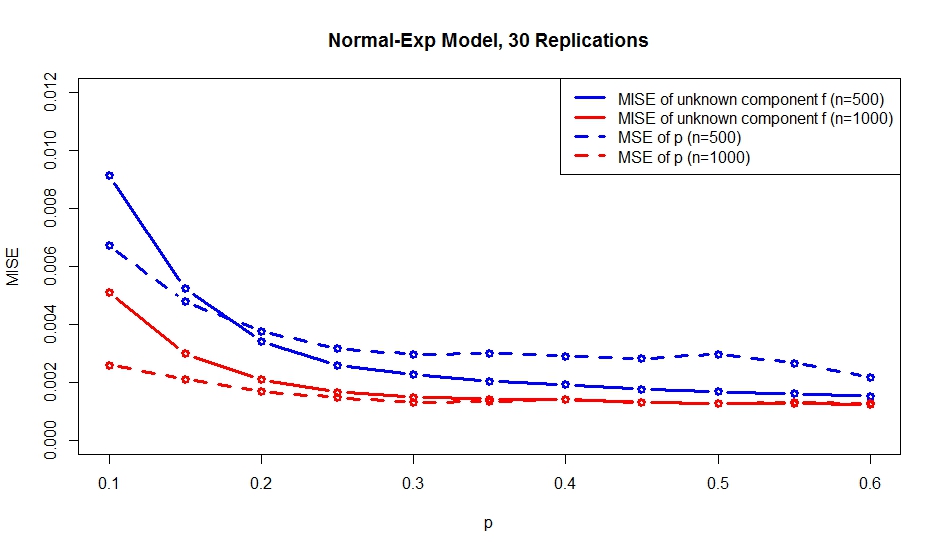}
  \label{fig:MSE:Exp}}
  \subfigure[Normal-Gamma mixture]{
  \includegraphics[width=0.48\columnwidth]{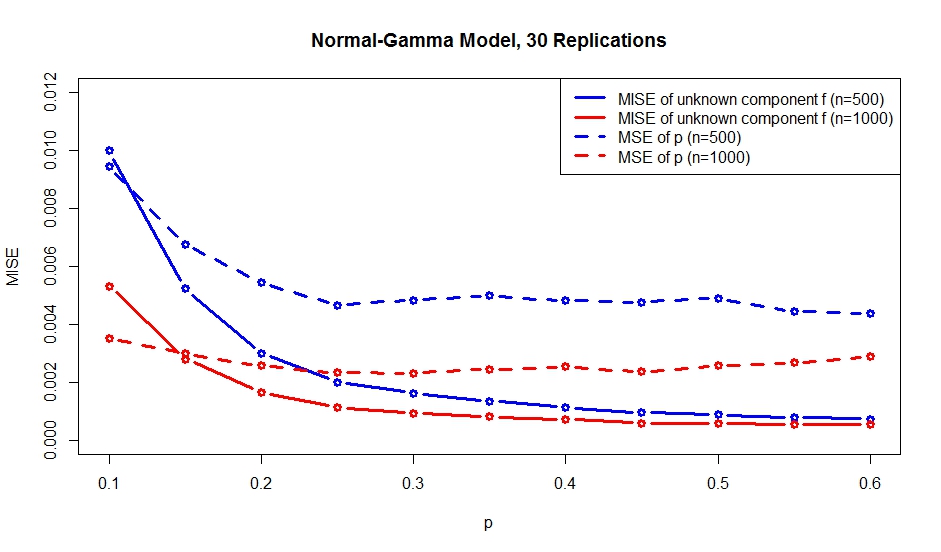}
  \label{fig:MSE:Gamma}}
  \caption{MISE of $\hat f$ and MSE of $\hat p$}
\end{figure}

Another important issue in practice is, of course, the bandwidth selection. Earlier, we simply used a fixed bandwidth selected using the classical Silverman's rule of thumb. In general, when the unknown density is not likely to be normal, the use of Silverman's rule may be a somewhat rough approach. Moreover, in an iterative algorithm, every successive step of iteration brings a refined estimate of the unknown density component; therefore, it seems a good idea to put this knowledge to use. Such an idea was suggested earlier in \cite{chauveau2015semi}. Here we suggest using a version of the $K$-fold cross validation method specifically adopted for use in an iterative algorithm. First, let us suppose we have a sample $X_{1},\ldots,X_{n}$ of size $n$; we begin with randomly partitioning it into $K$ approximately equal subsamples. For ease of notation, we denote each of these subsamples $X^{k}$, $k=1,\ldots,K$. Randomly selecting one of the $K$ subsamples, it is possible to treat the remaining $K-1$ subsamples as a training dataset and the selected subsample as the validation dataset. We also need to select a grid of possible bandwidths. To do so, we compute the preliminary bandwidth $h_{s}$ first according to the Silverman's rule of thumb; the bandwidth grid is defined as lying in an interval $[h_s -l,h_s +l]$ where $2*l$ is the range of bandwidths we plan to consider. Within this interval, each element of the bandwidth grid is computed as $h_{i}=h_s\pm \frac{i}{M}l$, $i=0,1,\ldots,M$ for some positive integer $M$. At this point, we have to decide whether a fully iterative bandwidth selection procedure is necessary. It is worth noting that a fully iterative bandwidth selection algorithm leads to the situation where the bandwidth changes at each step of iteration. This, in turn, implies that the monotonicity property of our algorithm derived in Theorem \eqref{empirical:descent:property} is no longer true. To reconcile these two paradigms, we implement the following scheme. As in earlier simulations, we use the triangular smoothing kernel. First, we iterate a certain number of times $T$ to obtain a reasonably stable estimate of the unknown $f$; if we do it using the full range of the data, we denote the resulting estimate $\hat f_{nh}^{T}(x)=\frac{\alpha_{n}^{T}}{n}\sum_{i=1}^n K_h(x-X_i)w_{n}^{T-1}(X_i)$. Integrating the resulting expression, we can obtain the squared $L_{2}$-norm of $\hat f_{nh}^{T}(x)$ as 
\[
||\hat f_{nh}^{T}||^{2}_{2}=\int\left[\frac{\alpha_{n}^{T}}{n}\sum_{i=1}^{n}w_{n}^{T-1}(X_i) K_h(x-X_i)\right]^2\,dx. 
\]  
For each of $K$ subsamples of the original sample, we can also define a ``leave-$k$th subsample out" estimator of the unknown component $f$ as $\hat f_{nh,-X_k}^{T}(x)$, $k=1,\ldots,K$ obtained after $T$ steps of iteration. At this point, we can define the CV optimization criterion as (see, for example, \cite{eggermont2001maximum})as 
\[
CV(h)=||\hat f_{nh}^{T}||^{2}_{2}-\frac{2}{n}\sum_{k=1}^{K}\sum_{x_{i}\in X_k}{\hat f}^{T}_{nh,-X_k}(x_{i}).
\]
Finally, we select 
\[
h^{*}=argmin\, CV(h)
\]
as a proper bandwidth. Now, we fix the bandwidth $h^{*}$ and keep it constant beginning with the iteration step $T+1$ until the convergence criterion is achieved and the process is stopped.  An example of a cross validation curve of $CV(h)$ is given in Figure \eqref{fig:BandwidthCV}. Here, we took a sample of size $500$ from a mixture model with a known component of $N(6,1)$, an unknown component of $Gamma(2,1)$ and a mixing proportion $p=0.5$; we also chose  $K=50$, $l=0.4$, $M=10$, and $T=5$. We tested the possibility of using larger number of iterations before selecting the optimal bandwidth $h$; however, already $T=10$ results in the selection of $h^{*}$ close to zero. We believe that the likeliest reason for that is the overfitting of the estimate of the unknown component $f$. The minimum of $CV(h)$ is achieved at around $h=0.68$.  Using this bandwidth and running the algorithm until the stopping criterion is satisfied, gives us the estimated mixing proportion $\hat p=0.497$. As a side remark, in this particular case the Silverman's rule of thumb gives a very similar estimated bandwidth $\hat h=0.72$. 
\begin{figure}[!htb]
  \centering
   \includegraphics[width=0.6\columnwidth]{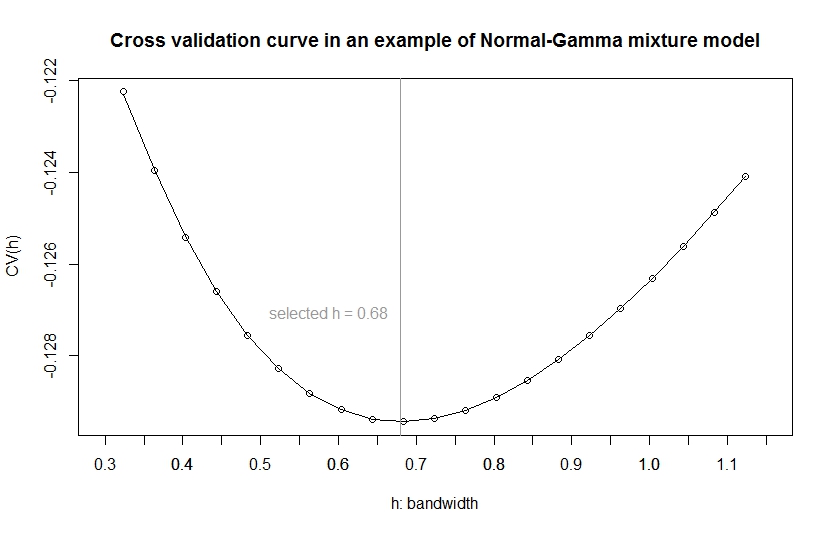}
  \caption{A plot of $CV(h)$ used for bandwidth selection}
  \label{fig:BandwidthCV}
\end{figure}

As a last step, we want to compare our method with the symmetrization method of \cite{bordes_delmas}. To do this, we will use a  normal-normal model since the method of \cite{bordes_delmas} is only applicable when an unknown component belongs to a location family. Although such a model does not satisfy the sufficient criterion of the Lemma \eqref{identifiability}, it satisfies the necessary and sufficient identifiability criterion given in Lemma $4$ of \citet{patra2015estimation} (see also Remark $3$ from the Supplement to \citet{patra2015estimation} for even clearer statement about identifiability for normal-normal models in our context); therefore, we can use it for testing purposes.  The known component has Gaussian distribution with mean $0$ and standard deviation $1$, the unknown has mean $6$ and standard deviation $1$, and we also consider two possible choices of mixture weight, $p=0.3$ and $p=0.5$. The results for two different sample sizes, $n=500$, and $n=1000$, and $200$ replications, are given below in Tables \ref{table:symmetrization} and \ref{table:ours}. Each estimate is accompanied by its standard deviation in parentheses. Note that the proper expectation here is that our method should perform similarly to the method of \cite{bordes_delmas} but not beat it, for several reasons.  First, the mean of the unknown Gaussian distribution is directly estimated as a parameter in the symmetrization method, while it is the nonparametric probability density function that is directly estimated by our method. Thus, in order to calculate the mean of the second component, we have to take an extra step when using our method and employ numerical integration. This is effectively equivalent to estimating a functional of an unknown (and so estimated beforehand) density function; therefore, somewhat lower precision of our method when estimating the mean, compared to symmetrization method, where the mean is just a Euclidean parameter, should be expected. Second, when using symmetrization method, we followed an acceptance/rejection procedure exactly as in \cite{bordes_delmas}. That procedure amounts to dropping certain ``bad" samples whereas our method keeps all the samples. Third, the method of \cite{bordes_delmas}, when estimating an unknown component, uses the fact that this component belongs to a location family - something that our method, more general in its assumptions, does not do. Keeping all of the above in mind, we can see from Tables  \eqref{table:symmetrization} and \eqref{table:ours} that both methods produce comparable results, especially when the sample size is $n=1000$. Also, as explained above, it does turn out that our method is practically as good as the method of \cite{bordes_delmas} when it comes to estimating probability $p$ and slightly worse when estimating the mean of the unknown component. However, even when estimating the mean of the unknown component, increase in sample size from $500$ to $1000$ reduces the difference in performance substantially. 

\begin{table}
\caption{Mean(SD) of estimated $p/\mu$ obtained by the symmetrization method}
\centering
\begin{tabular}{|c|c|c|}
\hline
\hline
$K=200$ & $n=500$ & $n=1000$ \\
\hline
$p=0.3/\mu = 6$ & 0.302(0.022)/5.989(0.095) &  0.302(0.016)/5.998( 0.064)  \\
\hline
$p=0.5/\mu = 6$ & 0.502(0.024)/5.999(0.067) & 0.502(0.017)/6.003(0.050)  \\
\hline
\end{tabular}
\label{table:symmetrization}
\end{table}

\begin{table}
\centering
\begin{tabular}{|c|c|c|}
\hline
\hline
$K=200$ & $n=500$ & $n=1000$ \\
\hline
$p=0.3/\mu = 6$ & 0.315(0.024)/5.772(0.238) & 0.312(0.018)/5.818(0.178)  \\
\hline
$p=0.5/\mu = 6$ & 0.516(0.026)/5.855(0.155) & 0.512(0.018)/5.883(0.117)  \\
\hline
\end{tabular}
\caption{Mean(SD) of estimated $p/\mu$ obtained by our algorithm}
\label{table:ours}
\end{table}

\section{A real data example}\label{realdata}

The acidification of lakes in parts of North America and Europe is a serious concern. In $1983$, the US Environmental Protection Agency (EPA) began the EPA National Surface Water Survey (NSWS) to study acidification as well as other characteristics of US lakes. The first stage of NSWS was the Eastern Lake Survey, focusing on particular regions in Midwestern and Eastern US. Variables measured include acid neutralizing capacity (ANC), pH, dissolved organic carbon, and concentrations of various chemicals such as iron and calcium. The sampled lakes were selected systematically from an ordered list of all lakes appearing on $1:250,000$ scale US Geological Survey topographic maps. Only surface lakes with the surface area of at least $4$ hectares were chosen. 

Out of all these variables, ANC is often the one of greatest interest. It describes the capability of the lake to neutralize acid; more specifically, low (negative) values of ANC can lead to a loss of biological resources. We use a dataset containing, among others, ANC data for a group of $155$ lakes in north-central Wisconsin. This dataset has been first  published in \cite{crawford1992modeling} in Table $1$ and analyzed in the same manuscript.  \cite{crawford1992modeling} argue that this dataset is rather heterogeneous due to the presence of lakes that are very different in their ANC within the same sample. In particular, seepage lakes, that have neither inlets nor outlets tend to be very low in ANC whereas drainage lakes that include flow paths into and out of the lake tend to be higher in ANC. Based on this heterogeneity, \cite{crawford1992modeling} suggested using an empirical mixture of two lognormal densities to fit this dataset. \cite{crawford1994application} also considered that same dataset; they suggested using a modification of Laplace method to estimate posterior component density functions in the Bayesian analysis of a finite lognormal mixture. Note that \cite{crawford1994application} viewed the number of components in the mixture model as a parameter to be estimated; their analysis suggests a mixture of either two or three components.   

The sample histogram for the ANC dataset is given on Figure 1 of \cite{crawford1994application}. The histogram is given for a log transformation of the original data $log(ANC+50)$. \cite{crawford1994application} selected this transformation to avoid numerical problems arising from maximization involving a truncation; the choice of $50$ as an additive constant is explained in more detail in \cite{crawford1994application}.  The empirical distribution is clearly bimodal; moreover, it exhibits a heavy upper tail. This is suggestive of a two-component mixture where the first component may be Gaussian while the other is defined on the positive half of the real line and has a heavy upper tail. We estimate a two-component density mixture model for this empirical distribution using two approaches. First, we follow the Bayesian approach of \cite{crawford1994application}  using the prior settings of Table $4$ in that manuscript. Switching to our framework next, we assume that the normal component is a known one while the other one is unknown. For the known normal component, we assume the mean $\mu_1 = 4.375$ and $\sigma_1 = 0.416$; these are the estimated values obtained in \cite{crawford1994application} under the assumption of two component Gaussian mixture for the original (not log transformed) data and given in their Table $4$. Next, we apply our algorithm in order to obtain an estimate of the mixture proportion and a non-parametric estimate of the unknown component to compare with respective estimates in \cite{crawford1994application}. We set the initial value of the 	mixture proportion as $p^0=0.3$ and the initial value of the unknown component as a normal distribution with mean $\mu_2^0=8$ and standard deviation  $\sigma_2^0=1$. The iterations stop when $|p^{t+1}-p^t|<10^{-4}$.  After $171$ iterations, the algorithm terminates with an estimate of mixture proportion $\hat{p}=0.4875$; for comparison purposes,  \cite{crawford1994application} produces an estimate $\hat{p}_{Bayesian}=1-0.533=0.4667$. The Figure \eqref{fig:fitmixture} shows the resulting density mixtures fitted using the method of \cite{crawford1994application} and our method against the background histogram of the log-transformed data. The Figure \eqref{fig:fitcomponent} illustrates the fitted first component of the mixture according to the method of \cite{crawford1994application} as well as the second component fitted according to both methods. Once again, the histogram of the log-transformed data is used in the background. 

\begin{figure}[!htb]
  \centering
   \includegraphics[width=0.6\columnwidth]{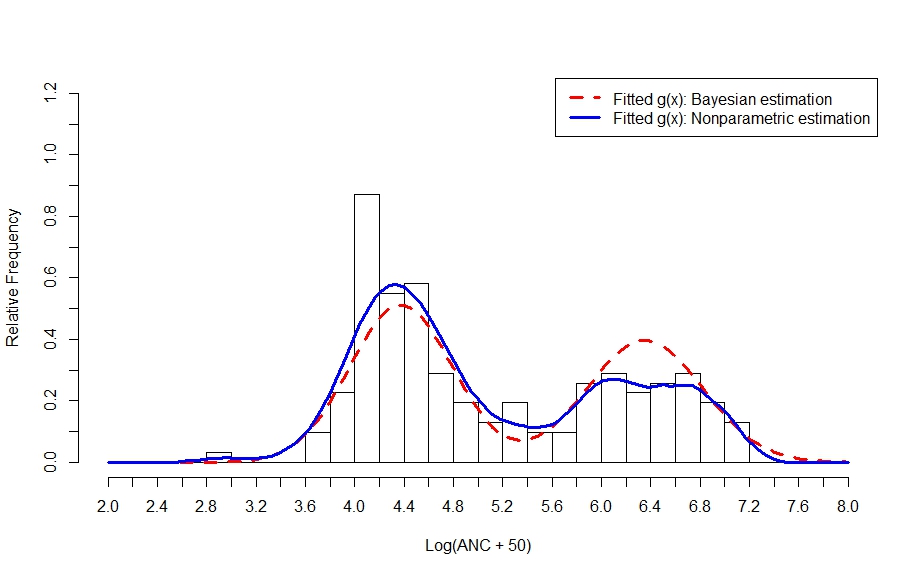}
  \caption{Fitted mixture densities}
  \label{fig:fitmixture}
\end{figure}

\begin{figure}[!htb]
  \centering
   \includegraphics[width=0.6\columnwidth]{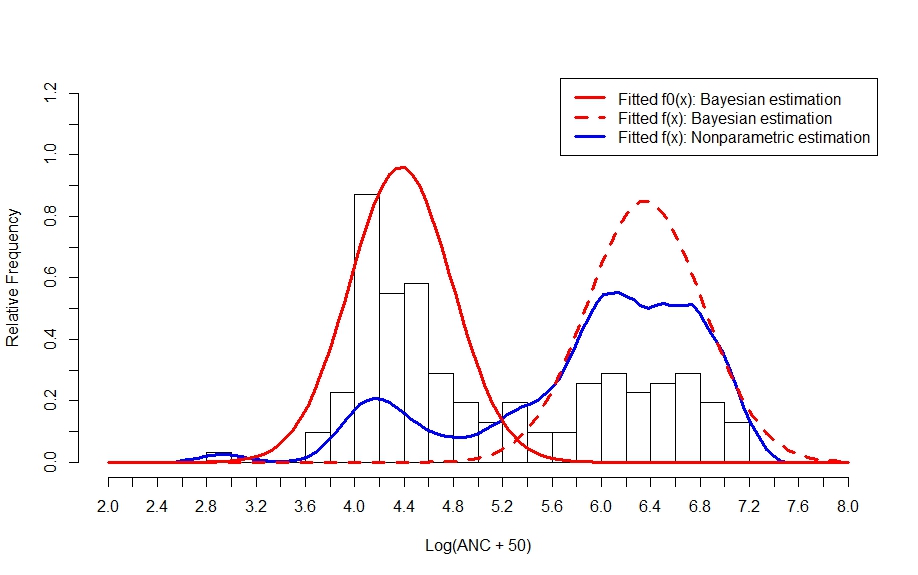}
  \caption{Fitted component densities}
  \label{fig:fitcomponent}
\end{figure}

Note that the mixture density curves based on both methods are rather similar in Figure \eqref{fig:fitmixture}. One notable difference is that the method of \cite{crawford1994application} suggests mixture with peak at the value of transformed ANC of about $6.4$ whereas our method produces a curve that seems to be following the histogram more closely in that location. The Figure \eqref{fig:fitcomponent} also seems to show that our method describes the data more faithfully than that of  \cite{crawford1994application}. Indeed, the second parametric component fitted by the method of \cite{crawford1994application} is unable to reproduce the first peak around $4.2$ at all. By doing so, the method of \cite{crawford1994application} suggests that the first peak is there only due to the first component. Our method, on the contrary, suggests that the first peak is at least partly due to the second component as well. Note that \cite{crawford1994application} discusses the possibility of a three component mixture for this dataset; results of our analysis suggest a possible presence of the third component as well based on a bimodal pattern of our fitted second component density curve. Finally, note that the method of \cite{crawford1994application} produces an estimated second component that implies a much higher second peak than the data really suggests whereas our method gives a more realistic estimate. 

\section{Discussion}\label{Discussion}

The method of estimating two component semiparametric mixtures with a known component introduced in this manuscript relies on the idea of maximizing the smoothed penalized likelihood of the available data. Another possible view of this problem is as the Tikhonov-type regularization problem (sometimes also called {\it variational regularization scheme} in optimization literature). The resulting algorithm is an MM algorithm that possesses the descent property with respect to the smoothed penalized likelihood functional. Moreover, we also show that the resulting algorithm converges under mild restrictions on the kernel function used to construct a nonlinear smoother ${\cal N}$. The algorithm also shows reasonably good numerical properties, both in simulations and when applied to a real dataset. If necessary, a number of acceleration techniques can be considered in case of large datasets;  for more details, see e.g. \cite{lange2000optimization}. 

As opposed to the symmetrization method of \cite{bordes_delmas}, our algorithm is also applicable to situations where the unknown component does not belong to any location family; thus, our method can be viewed as a more universal one of two. Comparing our method directly to that of \cite{bordes_delmas} and \citet{patra2015estimation} is a little difficult since our method is based,essentially, on perturbation of observed data the amount of which is controlled by the non-zero bandwidth $h$; thus, we arrive at what is apparently a solution different from that suggested in both \cite{bordes_delmas} and \citet{patra2015estimation}. 

There are a number of outstanding questions remaining concerning the model \eqref{model2} that will have to be investigated as a part of our future research. First, the constraint that an unknown density is defined on a compact space is, of course, convenient when proving convergence of the algorithm generated sequence; however, it would be desirable to lift it later. We believe that, at the expense of some additional technical complications, it is possible to prove all of our results when the unknown density function $f(x)$ is defined on the entire real line but has sufficiently thin tails. Second, an area that we have not touched in this manuscript is the convergence of resulting solutions. For example, a solution obtained by running an empirical version of our algorithm $(p_{n}^{*},f_{n}^{*})$ would be expected to converge to a solution $(p^{*},f^{*})$ obtained by the use of the original algorithm in the integral form. Analogously, as $h\rightarrow 0$, it is natural to expect that $(p^{*},f^{*})$ would converge to $(p,f)$ such that the identity $(1-p)f_{0}(x)+pf(x)=g(x)$ is satisfied. We expect that some recent results in optimization theory concerning Tikhonov-type regularization methods with non-metric fitting functionals (see, for example, \cite{flemming2010theory} ) will be helpful in this undertaking. Our research in this area is ongoing. 



\bibliographystyle{Chicago}%
\bibliography{reference}

\end{document}